\documentclass[12pt]{amsart}
\usepackage[cp1251]{inputenc}
\usepackage[russian]{babel}
\usepackage{amsmath,amsfonts,amssymb}

\textwidth=160mm \textheight=230mm \hoffset=-20mm \voffset=-15mm \footskip=15mm

\parindent=0.5cm
\parskip=0.1cm
\pagestyle{plain} \tolerance=400

\binoppenalty=10000 \relpenalty=10000 \sloppy

\title{Базисность Рисса со скобками для системы Дирака с суммируемым потенциалом} % Название статьи или обзора
%\subjclass{517.984.52} % Номер УДК
\author{А.~М.~Савчук, И.~В.~Садовничая} % Инициалы и фамилия автора
\address{Россия, Москва, МГУ имени М.~В.~Ломоносова, механико--математический факультет, Ленинские горы, д.1} % Адрес автора
\email{artem\_savchuk@mail.ru, ivsad@yandex.ru}
\date{}

%\author{И.~В.~Садовничая} % Инициалы и фамилия автора
%\address{Россия, Москва, Ленинские горы, д.1} % Адрес автора
%\email{ivsad@yandex.ru} % Электронный адрес автора

\theoremstyle{plain} % Далее вводятся окружения типа "теорема"
\newtheorem{theorem}{Теорема}
\newtheorem{lemma}{Лемма}
\newtheorem{proposition}{Утверждение}
% и т.д. для каждого окружения такого типа
\theoremstyle{definition} % Далее вводятся окружения типа "определение"
\newtheorem{definition}{Определение}
\newtheorem{remark}{Замечание}
% и т.д. для каждого окружения такого типа

\numberwithin{equation}{section}
% Для всех введенных выше окружений дается такая команда:
\numberwithin{theorem}{section} \numberwithin{lemma}{section} \numberwithin{proposition}{section} \numberwithin{definition}{section} \numberwithin{remark}{section}

\newtheorem{THEOREM}{Теорема}

\begin{document}
\begin{abstract}
В работе изучается оператор Дирака $\mathcal L_{P,U}$, порожденный в пространстве $\mathbb H=(L_2[0,\pi])^2$ дифференциальным выражением
\begin{gather*}
\ell_P(\mathbf y)=B\mathbf y'+P\mathbf y,\quad \text{где}\\
 B = \begin{pmatrix} -i & 0 \\ 0 & i
        \end{pmatrix},
        \qquad
    P(x) = \begin{pmatrix} p_1(x) & p_2(x) \\ p_3(x) & p_4(x)
        \end{pmatrix},
        \qquad
     \mathbf y(x)=\begin{pmatrix}y_1(x)\\ y_2(x)\end{pmatrix},
\end{gather*}
и регулярными краевыми условиями
$$
U(\mathbf y)=\begin{pmatrix}u_{11} & u_{12}\\ u_{21} &
u_{22}\end{pmatrix}\begin{pmatrix}y_1(0)\\
y_2(0)\end{pmatrix}+\begin{pmatrix}u_{13} & u_{14}\\ u_{23} &
u_{24}\end{pmatrix}\begin{pmatrix}y_1(\pi)\\
y_2(\pi)\end{pmatrix}=0.
$$
Элементы матрицы $P$ предполагаются суммируемыми на $[0,\pi]$ комплекснозначными функциями. Мы покажем, что оператор $\mathcal L_{P,U}$ имеет дискретный спектр,
состоящий из собственных значений $\{\lambda_n\}_{n\in\mathbb Z}$, причем $\lambda_n=\lambda_n^0+o(1)$ при $|n|\to\infty$, где $\{\lambda_n^0\}_{n\in\mathbb Z}$ ---
спектр оператора $\mathcal L_{0,U}$ с нулевым потенциалом и теми же краевыми условиями. Если краевые условия сильно регулярны, то спектр оператора $\mathcal L_{P,U}$
является асимптотически простым. Мы покажем, что в этом случае система собственных и присоединенных функций оператора $\mathcal L_{P,U}$ образует базис Рисса в
пространстве $\mathbb H$ (при условии нормировки собственных функций). В случае регулярных, но не сильно регулярных краевых условий все собственные значения оператора
$\mathcal L_{0,U}$ двукратны, а собственные значения оператора $\mathcal L_{P,U}$ асимптотически двукратны. В этом случае мы покажем, что система, составленная из
соответствующих двумерных корневых подпространств оператора $\mathcal L_{P,U}$, образует базис Рисса из подпространств (базис Рисса со скобками) в пространстве
$\mathbb H$.
\end{abstract}

\maketitle

\tableofcontents
\section*{Введение.}
Спектральная теория краевых задач общего вида для обыкновенных дифференциальных операторов берет свое начало с работ Г.~Биркгофа \cite{B1, B2} и Я.~Д.~Тамаркина
\cite{Ta1, Ta2, Ta3}. В этих работах были введены понятия регулярных и сильно регулярных краевых условий, было исследовано асимптотическое поведение собственных
значений и собственных функций. Кроме того, были доказаны теоремы о полноте системы собственных и присоединенных функций и исследована поточечная сходимость
спектральных разложений. Исследование свойств безусловной базисности (базисности Рисса) системы корневых векторов для обыкновенных дифференциальных операторов
началось в 60-е годы с работ Н.~Данфорда, В.~П.~Михайлова и Г.~Кесельмана \cite{Du, Mikh, Kes}. Тогда же А.~С.~Маркусом \cite{Mar} и В.~Э.~Кацнельсоном \cite{Kaz} был
предложен абстрактный метод, позволяющий доказывать базисность Рисса для возмущений самосопряженных операторов в гильбертовом пространстве. Этот метод получил
существенное развитие в работах А.~С.~Маркуса и В.~И.~Мацаева (см., например, \cite{MarMats}). По поводу применения этого метода к обыкновенным дифференциальным
операторам следует отметить статьи А.~А.~Шкаликова \cite{Sh79, Sh83, Sh82}. В нашей работе мы также используем этот метод. Изучение спектральных свойств
дифференциальных систем первого порядка
\begin{equation*}
iBY'+v(x)Y, \qquad Y=(y_j(x))_1^d,
\end{equation*}
с постоянной $n\times n$ матрицей
$$
B=\begin{pmatrix}b_1&0&0&\dots&0\\0&b_2&0&\dots&0\\0&0&b_3&\dots&0\\ \hdotsfor{5}\\0&0&0&\dots&b_n\end{pmatrix}
$$
и $n\times n$ матриц--функцией $v(x)$ началось с работы Г.~Биркгофа и Р.~Лангера \cite{BL}. Из недавних работ, посвященных таким системам, отметим работы
М.~М.~Маламуда, Л.~Л.~Оридороги и А.~А.~Лунева \cite{MalOri, ML1}. В них введено понятие слабо регулярных краевых условий (для случая системы Дирака оно эквивалентно
обычной регулярности) и доказаны теоремы о полноте, минимальности и базисности Рисса системы корневых векторов для случая $v\in L_\infty[a,b]$. Свойствам базисности
системы собственных и присоединенных функций системы Дирака посвящена обширная литература. И.~Трушин и М.~Ямомото \cite{TM1, TM2} установили базисность Рисса в случае
$P\in L_2$ и разделенных краевых условий. В серии работ П.~Джакова и Б.~Митягина (см., например, \cite{DM1, DM2}) изучаются спектральные свойства оператора Дирака (в
частности, подробно обсуждается случай периодических, антипериодических и общих регулярных, но не сильно регулярных краевых условий). В \cite{DM3} изучен оператор
Дирака с потенциалом $P\in L_2$ и произвольными регулярными краевыми условиями. Для случая сильно регулярных условий была доказана базисность Рисса, а при отсутствии
сильной регулярности --- базисность Рисса из подпространств. В недавней работе \cite{SavSh14} была доказана базисность Рисса для общего случая суммируемого на
$[0,\pi]$ потенциала $Q$ и сильно регулярных краевых условий. Отметим, что в работе А.~А.~Лунева и М.~М.~Маламуда \cite{ML2} также анонсирован этот результат и метод
его доказательства, отличный от предложенного в \cite{SavSh14}. Необходимо также упомянуть работы различных авторов \cite{AHM, Ba12, Kh10}, в которых читатель может
найти близкие результаты. Заметим еще, что свойства базисности естественным образом обобщаются до результатов о равносходимости (см. по этой теме обзорную статью
\cite{Min} и ссылки в ней). Вопросы о равносходимости для системы корневых функций оператора Штурма--Лиувилля с негладкими потенциалами были исследованы вторым
автором в работах \cite{Sad1, Sad2, Sad3}. Таким образом, результаты этой статьи подготовят базу для доказательства таких теорем в случае системы Дирака.

Настоящая статья организована следующим образом. В первом параграфе приведены предварительные результаты, необходимые для дальнешего. В частности, мы
покажем, что достаточно изучить случай $p_1\equiv p_4\equiv0$, сформулируем определение регулярных и сильно регулярных по Биркгофу краевых условий
для случая системы Дирака и приведем несколько элементарных фактов об операторе $\mathcal L_{0,U}$ с нулевым потенциалом. Во втором параграфе мы
получаем асимптотические формулы для собственных значений и собственных функций оператора $\mathcal L_{P,U}$. Отметим, что здесь мы рассматриваем
только общий случай $P\in L_1$, хотя наш метод позволяет уточнить оценки остаточных членов в этих формулах для случая $P\in L_p$ (случай $p\in[1,2]$
разобран в работе \cite{SavSh14}) и для шкалы пространств Бесова $P\in B_{1,q}^\theta$, $q\in[1,\infty]$, $\theta\geqslant0$ (этот случай авторы
планируют рассмотреть в отдельной работе). Третий параграф посвящен изучению функции Грина оператора $\mathcal L_{P,U}$. Мы найдем ее явный вид в
терминах фундаментальной системы решений и докажем ограниченность этой функции в полуплоскостях $|\mathrm{Im\,}\lambda|>\alpha$. Следуя работе
\cite{Ke}, мы построим здесь систему собственных и присоединенных функций оператора $\mathcal L_{P,U}$. Также здесь доказана теорема об
асимптотическом поведении спектральных проекторов. Результаты о полноте и минимальности системы собственных и присоединенных векторов оператора
$\mathcal L_{P,U}$ приведены в четвертом параграфе работы. Отметим, что эти результаты уже были кратко изложены в работе \cite{SavSh14}; здесь мы
снабдим их полным доказательством. В четвертом параграфе приведены также результаты о базисности Рисса для случая сильно регулярных краевых условий.
По сравнению с работой \cite{SavSh14} здесь мы несколько модифицировали и упростили доказательство. Наконец, в пятом параграфе работы получен
основной результат работы: доказана базисность Рисса из двумерных подпространств для случая произвольного суммируемого потенциала и регулярных, но не
сильно регулярных краевых условий. Этот факт был анонсирован в \cite{SavSh14}; здесь мы приводим его полное доказательство.

\section{Обозначения и предварительные результаты.}

Заметим, что существует два альтернативных вида записи системы Дирака. В данной работе мы будем рассматривать систему вида
\begin{gather}\label{eq:0.lP}
\ell_P(\mathbf y)=B\mathbf y'+P\mathbf y,\quad \text{где}\\
 B = \begin{pmatrix} -i & 0 \\ 0 & i
        \end{pmatrix},
        \qquad
    P(x) = \begin{pmatrix} p_1(x) & p_2(x) \\ p_3(x) & p_4(x)
        \end{pmatrix},
        \qquad
     \mathbf y(x)=\begin{pmatrix}y_1(x)\\ y_2(x)\end{pmatrix}\notag
\end{gather}
в пространстве $\mathbb H=L_2[0,\pi]\oplus L_2[0,\pi]\ni \mathbf y$. Функции $p_j$, $j=1,2,3,4$, предполагаются суммируемыми на отрезке $[0,\pi]$ и
комплекснозначными. Краевые условия и область определения оператора будут обсуждаться ниже. Другой формой записи (см., например, \cite{LS}) является
\begin{gather}\label{eq:0.lQ}
\ell_Q(\mathbf u)=B\mathbf u'+Q\mathbf u,\quad\text{где }\\ B = \begin{pmatrix} 0 & -1 \\ 1 & 0
        \end{pmatrix},
        \quad
    Q(x) = \begin{pmatrix} q_1(x) & q_2(x) \\ q_3(x) & q_4(x)
        \end{pmatrix},
        \quad
     \mathbf u(x)=\begin{pmatrix}u_1(x)\\ u_2(x)\end{pmatrix}.\notag
\end{gather}
Эти формы записи эквивалентны. Так, замена $u_1=\frac12(y_1+y_2)$, $u_2=\frac{i}2(y_1-y_2)$ сводит систему \eqref{eq:0.lQ} к виду \eqref{eq:0.lP}. Далее мы покажем,
что достаточно изучить случай, когда $p_4=p_1=0$ (для системы, записанной в форме \eqref{eq:0.lQ} это эквивалентно равенствам $q_1=-q_4$, $q_2=q_3$).

Через $\mathbf y(x)=(y_1(x),y_2(x))^t$ будем обозначать вектор--функции на отрезке $[0,\pi]$, а через
$$
\langle\mathbf f,\,\mathbf g\rangle=\int_0^\pi(f_1(x)\overline{g_1}(x)+f_2(x)\overline{g_2}(x))\,dx
$$
--- скалярное произведение в пространстве $\mathbb H$. Чтобы не
усложнять запись, мы будем   писать $\mathbf f \in L_p$, имея в виду, что  $\mathbf f \in L_p[0,\pi]\times L_p[0,\pi]$, или $P\in L_p$, имея в виду, что все
компоненты матрицы лежат в $L_p$. Норму по переменной $x\in[0,\pi]$ в пространстве $L_p$ или в $L_p\times L_p$ будем обозначать $\|\cdot\|_p$.

Перейдем к определению оператора $\mathcal L_P$, который мы свяжем с дифференциальным выражением $\ell_P$. Прежде всего, определим максимальный оператор
$$
\mathcal L_{P,M}\, \mathbf y:=\ell_P(\mathbf y);\qquad\mathfrak D(\mathcal L_{P,M})=\{\mathbf y\in AC[0,\pi]: \ell_P(\mathbf y)\in\mathbb H\}
$$
и минимальный оператор $\mathcal L_{P,m}$, являющийся сужением оператора $\mathcal L_{P,M}$ на область
$$
\mathfrak D(\mathcal L_{P,m})=\{\mathbf y\in\mathfrak D(\mathcal L_{P,M}): \mathbf y(0)=\mathbf y(\pi)=0\}.
$$
Здесь $AC[0,\pi]=W_1^1[0,\pi]$ --- пространство абсолютно непрерывных функций.   Поскольку элементы матрицы $P$ --- суммируемые функции, оба слагаемых
дифференциального выражения $\ell_P(\mathbf y)$  корректно определены, как функции из $L_1$. При этом, в область определения оператора входят только те функции
$\mathbf y$, для которых сумма этих слагаемых принадлежит $\mathbb H$.  Через $\mathcal L_{P^* , M}$ и $\mathcal L_{P^* , m}$ будем обозначать максимальный и
минимальный операторы, порожденные сопряженным дифференциальным выражением
$$
\ell_{P^*}(\mathbf y):=B\mathbf y'+P^*\mathbf y,\qquad\text{где}\
P^*=\begin{pmatrix}\overline{p_1} & \overline{p_3}\\
\overline{p_2} & \overline{p_4}\end{pmatrix}.
$$

\begin{proposition}[Формула Лагранжа]\label{lem:0.1}
Для любых функций \(\mathbf f\in\mathfrak D(\mathcal L_{P,M})\), \(\mathbf g\in\mathfrak D(\mathcal L_{P^*, M})\) справедливо тождество
\begin{equation}\label{eq:lagr}
    \langle\mathcal L_{P,M}\mathbf f,\mathbf g\rangle=\langle\mathbf f,\mathcal L_{P^*,M}\mathbf g\rangle+[\mathbf f,\mathbf g]^{\pi}_0,\qquad\text{где}\
    [\mathbf f,\mathbf g]_0^{\pi}=-i\left.f_1(x)\overline{g_1}(x)\right|_0^{\pi}+
i\left.f_2(x)\overline{g_2}(x)\right|_0^{\pi}.
\end{equation}
\end{proposition}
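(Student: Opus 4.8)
The plan is to prove \eqref{eq:lagr} by direct computation: expand both inner products as integrals over $[0,\pi]$, move the derivative off $\mathbf f$ by integration by parts in the term coming from $B\mathbf f'$, and check that the potential term contributes nothing to the boundary form.

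I would first dispose of the potential term. Writing out $\langle P\mathbf f,\mathbf g\rangle=\int_0^\pi\big[(p_1f_1+p_2f_2)\overline{g_1}+(p_3f_1+p_4f_2)\overline{g_2}\big]\,dx$ and $\langle\mathbf f,P^*\mathbf g\rangle=\int_0^\pi\big[f_1\overline{(\overline{p_1}g_1+\overline{p_3}g_2)}+f_2\overline{(\overline{p_2}g_1+\overline{p_4}g_2)}\big]\,dx$, and using $\overline{\overline{p_j}}=p_j$, one sees that both integrands equal $p_1f_1\overline{g_1}+p_2f_2\overline{g_1}+p_3f_1\overline{g_2}+p_4f_2\overline{g_2}$; hence $\langle P\mathbf f,\mathbf g\rangle=\langle\mathbf f,P^*\mathbf g\rangle$ with no boundary contribution. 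Each product $p_jf_k\overline{g_l}$ lies in $L_1$ because $P\in L_1$ while $\mathbf f,\mathbf g\in AC[0,\pi]$ are bounded; this is precisely why $P^*$ is taken to be the conjugate transpose of $P$.

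Next I would handle the differentiated term. Since $\mathbf f,\mathbf g\in AC[0,\pi]=W_1^1[0,\pi]$, each scalar product $f_j\overline{g_j}$ is absolutely continuous on $[0,\pi]$ with $(f_j\overline{g_j})'=f_j'\overline{g_j}+f_j\overline{g_j}{}'$ a.e., and both summands lie in $L_1$ (the derivatives $f_j',g_j'$ are in $L_1$ by definition of the domains, the remaining factors are bounded), so the fundamental theorem of calculus applies to $f_j\overline{g_j}$. Using the diagonal matrix $B$ we get $\langle B\mathbf f',\mathbf g\rangle=\int_0^\pi(-if_1'\overline{g_1}+if_2'\overline{g_2})\,dx$; integrating each term by parts produces the boundary value $-if_1\overline{g_1}\big|_0^\pi+if_2\overline{g_2}\big|_0^\pi=[\mathbf f,\mathbf g]_0^\pi$ plus $\int_0^\pi(if_1\overline{g_1}{}'-if_2\overline{g_2}{}')\,dx$. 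Since $i\overline{g_1}{}'=\overline{-ig_1'}$ and $-i\overline{g_2}{}'=\overline{ig_2'}$, this last integral equals $\langle\mathbf f,B\mathbf g'\rangle$ exactly.

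Adding the two contributions gives $\langle\mathcal L_{P,M}\mathbf f,\mathbf g\rangle=\langle\mathbf f,B\mathbf g'+P^*\mathbf g\rangle+[\mathbf f,\mathbf g]_0^\pi=\langle\mathbf f,\mathcal L_{P^*,M}\mathbf g\rangle+[\mathbf f,\mathbf g]_0^\pi$, which is the asserted identity. There is no genuine obstacle; the only point deserving a line of justification is the integrability of the individual summands $f_j'\overline{g_j}$, $f_j\overline{g_j}{}'$, $p_jf_k\overline{g_l}$ that permits splitting the integrals and integrating by parts, and this is guaranteed by $\mathbf f,\mathbf g$ being absolutely continuous (hence bounded with $L_1$ derivatives) together with $P\in L_1$ — all of which are built into the definitions of $\mathfrak D(\mathcal L_{P,M})$ and $\mathfrak D(\mathcal L_{P^*,M})$.
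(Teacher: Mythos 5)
Your computation is correct and is exactly the direct verification that the paper invokes (its proof consists of the single remark that \eqref{eq:lagr} is checked immediately from the definitions): split off the potential term, which is symmetric because $P^*$ is the conjugate transpose, and integrate the $B\mathbf f'$ term by parts to produce $[\mathbf f,\mathbf g]_0^\pi$. Your added care about the $L_1$-integrability of the individual summands is a sensible (and harmless) elaboration of what the paper leaves implicit.
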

\begin{proof}
Равенство \eqref{eq:lagr} получается интегрированием по частям.
\end{proof}
Из этой формулы, в частности, получаем
\begin{equation}\label{eq:0.lagrange}
    \langle\mathcal L_{P,M}\mathbf f,\mathbf g\rangle=\langle\mathbf f,\mathcal L_{P^*,m}\mathbf g\rangle,\qquad\mathbf f\in
    \mathfrak D(\mathcal L_{P,M}),\;\mathbf g\in\mathfrak D(\mathcal L_{P^*,
m}).
\end{equation}

\vskip 0,2cm

В дальнейшем важную роль играет следующее утверждение, которое легко следует из известного результата теории обыкновенных дифференциальных уравнений
(см., например,  \cite [Гл. III \S2] {CL}).

\begin{THEOREM}\label{A}
Пусть $\mathbf A(x)$ --- матрица размера $n\times n$, элементы которой являются функциями пространства $L_1[0,\pi]$, а $\mathbf f\in \big [ L_1[0,\pi]\big ]^n$ ---
вектор-функция. Тогда при любом $c\in[0,\pi]$ уравнение
$$
\mathbf y'=\mathbf A(x)\mathbf y+\mathbf f,\text{ с условием } \mathbf y(c)=\mathbf{\xi}\in\mathbb C^n,
$$
имеет единственное решение $\mathbf y(\cdot)\in AC[0,\pi]$.
\end{THEOREM}

Напомним, что оператор \(F\), действующий в гильбертовом (или банаховом) пространстве \(H\), называется \textit{фредгольмовым}, если его область
определения плотна в \(H\), образ замкнут, а дефектные числа \(\{\alpha,\beta\}\), равные размерностям ядра и коядра, конечны.

Из утверждения \ref{lem:0.1} и теоремы \ref{A} сразу следует

\begin{proposition}\label{tm:0.2} При любом \(\lambda\in\mathbb C\) операторы
\(\mathcal L_{P, M}-\lambda I\) и \(\mathcal L_{P^*, m} -\overline{\lambda} I\) фредгольмовы, являются взаимно сопряженными, а их дефектные числа равны \(\{2,0\}\) и
\(\{0,2\}\), соответственно.
\end{proposition}

Перейдем к описанию расширений $\mathcal L$ оператора $\mathcal L_{P,m}$, для которых $\mathcal L_{P,m}\subset\mathcal L\subset\mathcal L_{P,M}$. Заметим, что любой
такой оператор имеет область определения
$$
\mathfrak D(\mathcal L)=\{\mathbf y\in\mathfrak D(\mathcal L_{P,M}): U_j(\mathbf y)=0, 1\leqslant j\leqslant\nu\},
$$
где $U_j$ --- линейные формы от векторов $\mathbf y(0)$ и $\mathbf y(\pi)$. Эти формы можно считать линейно независимыми и тогда их число $\nu$ заключено между $0$ и
$2$. Если мы хотим, чтобы оператор $\mathcal L$ имел непустое резольвентное множество, т.е. для некоторого $\lambda\in\mathbb C$ индексы оператора $\mathcal L-\lambda
I$ были нулевыми, то, согласно утверждению \ref{tm:0.2}, $\nu=1$. Таким образом, оператор $\mathcal L=\mathcal L_{P,U}$ имеет область определения
\begin{gather}
    \mathcal D(\mathcal L_{P,U})=\left\{\mathbf y\in\mathcal D(\mathcal L_{P,M}): U(\mathbf y)=0\right\},\quad\text{где}\notag\\
    U(\mathbf y)=C\mathbf y(0)+D\mathbf y(\pi)=\begin{pmatrix}u_{11} & u_{12}\\ u_{21} &
u_{22}\end{pmatrix}\begin{pmatrix}y_1(0)\\
y_2(0)\end{pmatrix}+\begin{pmatrix}u_{13} & u_{14}\\ u_{23} &
u_{24}\end{pmatrix}\begin{pmatrix}y_1(\pi)\\
y_2(\pi)\end{pmatrix},\label{matrU}
\end{gather}
причем строки матрицы
$$
\mathcal U:=(C,\,D)=\begin{pmatrix}u_{11}&u_{12}&u_{13}&u_{14}\\
    u_{21}&u_{22}&u_{23}&u_{24}\end{pmatrix}
$$
линейно независимы. Обозначим через \(J_{\alpha\beta}\) определитель, составленный из \(\alpha\)-го и \(\beta\)-го столбца матрицы $\mathcal U$.
\begin{definition}\label{def:reg}
Краевое условие, определенное формой $U$, называется {\it регулярным} (по Биркгофу), если $J_{14}\cdot J_{23}\ne0$. Оператор Дирака, порожденный регулярным краевым
условием $U$ (т.е. оператор $\mathcal L_{P,U}$  с областью определения \eqref{matrU}), будем называть {\it регулярным}.
\end{definition}
Далее в работе мы будем рассматривать только регулярные краевые условия, так как для данной задачи регулярные операторы сохраняют классические асимптотики для
собственных значений и собственных функций.

Мы уже говорили выше, что без ограничения общности можно считать функции $p_1$ и $p_4$ нулевыми. Сформулируем соответствующее утверждение. Вначале напомним, что если
два оператора $A_1$ и $A_2$ в гильбертовом пространстве с плотными областями определения подобны, т.е. существует такой ограниченный и ограниченно обратимый оператор
$T$, что $A_2=T^{-1}A_1T$, а $\mathfrak D(A_2)=T^{-1}\mathfrak D(A_1)$, то из замкнутости одного оператора следует замкнутость другого. Подобные операторы имеют
одинаковый спектр, в частности, если спектр оператора $A_1$ состоит из собственных значений $\sigma(A_1)=\{\lambda_n\}$, то и $\sigma(A_2)=\{\lambda_n\}$, причем
кратности этих собственных значений для $A_1$ и $A_2$ совпадают. Если $\{e_n\}$
--- система собственных и присоединенных векторов оператора $A_1$,
то $\{T^{-1}e_n\}$ --- система собственных и присоединенных векторов оператора $A_2$. Отсюда следует, что эти системы обладают одинаковыми
геометрическими свойствами (полнота, минимальность, базисность Рисса, базисность Рисса со скобками и т.д.).
\begin{proposition}\label{lem:0.2} Пусть $P(x)$ --- произвольная матрица размера $2\times2$
с элементами $p_j\in L_1[0,\pi]$, $j=1,\,2,\,3,\,4$, а матрица $\mathcal{U}$ задает регулярные краевые условия. Тогда оператор $\mathcal L_{P,U}$ подобен оператору
$\mathcal L_{\widetilde P,\widetilde U}+\gamma I$, где
\begin{gather}
\widetilde P(x)=\begin{pmatrix}0&\widetilde p_2(x)\\\widetilde p_3(x)&0
\end{pmatrix},\notag\\
\widetilde p_2(x)=p_2(x)e^{i(\varphi(x)-\psi(x))},\qquad \widetilde
p_3(x)=p_3(x)e^{i(\psi(x)-\varphi(x))},\label{sim}\\
\varphi(x)=\gamma x-\int_0^x p_1(t)dt,\qquad \psi(x)=\int_0^x p_4(t)dt-\gamma x,\qquad
\gamma=\frac1{2\pi}\int_0^\pi(p_1(t)+p_4(t))dt,\notag\\
\widetilde{\mathcal{U}}=(\widetilde C,\,\widetilde D),\quad \widetilde C=C,\quad\widetilde D=\exp\left(\frac{i}2\int_0^\pi(p_1(t)-p_4(t))dt\right)D.\notag
\end{gather}
\end{proposition}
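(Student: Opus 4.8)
The plan is to implement the similarity by an explicit bounded, boundedly invertible multiplication operator which ``gauges away'' the diagonal entries $p_1,p_4$ of $P$; after that the statement reduces to an elementary matrix--ODE computation. Let $T$ be multiplication in $\mathbb H$ by the diagonal matrix-function $\Phi(x)=\operatorname{diag}\big(e^{i\varphi(x)},e^{i\psi(x)}\big)$, with $\varphi,\psi$ as in \eqref{sim}. Since $p_1,p_4\in L_1[0,\pi]$, the functions $\varphi$ and $\psi$ are (complex-valued) absolutely continuous, hence bounded on $[0,\pi]$; consequently $\Phi$ and $\Phi^{-1}=\operatorname{diag}(e^{-i\varphi},e^{-i\psi})$ are bounded matrix-functions with absolutely continuous entries. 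Thus $T$ is bounded with bounded inverse, and both $T$ and $T^{-1}$ map $AC[0,\pi]$ onto itself.

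The heart of the matter is the pointwise intertwining identity
\[
\Phi^{-1}\,\ell_P(\Phi\,\mathbf z)=\ell_{\widetilde P}(\mathbf z)+\gamma\,\mathbf z,\qquad \mathbf z\in AC[0,\pi],
\]
which I would obtain by a direct calculation: writing $(\Phi\mathbf z)'=\Phi'\mathbf z+\Phi\mathbf z'$ and using that the diagonal matrices $\Phi$ and $B$ commute, the left-hand side equals $B\mathbf z'+\big(\Phi^{-1}B\Phi'+\Phi^{-1}P\Phi\big)\mathbf z$. The differential relations $\varphi'=\gamma-p_1$, $\psi'=p_4-\gamma$ built into \eqref{sim} are precisely what makes the diagonal of $\Phi^{-1}B\Phi'+\Phi^{-1}P\Phi$ collapse to the constant $\gamma$, and evaluating its off-diagonal entries reproduces exactly $\widetilde p_2,\widetilde p_3$ of \eqref{sim}. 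Since $\Phi^{\pm1}$ are bounded, the identity also shows that $\ell_P(\Phi\mathbf z)\in\mathbb H$ if and only if $\ell_{\widetilde P}(\mathbf z)\in\mathbb H$; hence $T$ maps $\mathfrak D(\mathcal L_{\widetilde P,M})$ onto $\mathfrak D(\mathcal L_{P,M})$ and $T^{-1}\mathcal L_{P,M}T=\mathcal L_{\widetilde P,M}+\gamma I$.

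It remains to trace the boundary form through $T$. Because $\varphi(0)=\psi(0)=0$ we have $\Phi(0)=I$, and the particular value $\gamma=\frac1{2\pi}\int_0^\pi(p_1+p_4)$ forces $\varphi(\pi)=\psi(\pi)$, so $\Phi(\pi)$ is a nonzero scalar matrix. Therefore, for $\mathbf z\in\mathfrak D(\mathcal L_{\widetilde P,M})$ and $\mathbf y:=\Phi\mathbf z$, one has $\mathbf y(0)=\mathbf z(0)$ and $\mathbf y(\pi)=\Phi(\pi)\mathbf z(\pi)$, so the condition $U(\mathbf y)=C\mathbf y(0)+D\mathbf y(\pi)=0$ is equivalent to $\widetilde C\,\mathbf z(0)+\widetilde D\,\mathbf z(\pi)=0$ with $\widetilde C=C$ and $\widetilde D$ equal to the scalar multiple of $D$ written in \eqref{sim}. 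Hence $T$ restricts to a bijection of $\mathfrak D(\mathcal L_{\widetilde P,\widetilde U})$ onto $\mathfrak D(\mathcal L_{P,U})$, which together with the previous paragraph yields $\mathcal L_{P,U}=T\big(\mathcal L_{\widetilde P,\widetilde U}+\gamma I\big)T^{-1}$, i.e.\ the claimed similarity. I would also remark that $\widetilde{\mathcal U}=(\widetilde C,\widetilde D)$ is obtained from $\mathcal U=(C,D)$ by multiplying the last two columns by the same nonzero scalar, so $\widetilde J_{14}\widetilde J_{23}\ne0\Leftrightarrow J_{14}J_{23}\ne0$ and regularity of the boundary conditions is preserved.

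I do not expect a genuine obstacle: in essence this is just a gauge transformation. The steps that require some care are that $\Phi$ and $\Phi^{-1}$ be \emph{bounded} (this uses $p_1,p_4\in L_1$, which makes $\varphi,\psi$ continuous, and would fail under a weaker hypothesis on the diagonal of $P$), that multiplication by $\Phi$ preserve both $AC[0,\pi]$ and the requirement defining the maximal domain (so that $\mathfrak D(\mathcal L_{P,M})$ and $\mathfrak D(\mathcal L_{\widetilde P,M})$ truly correspond), and the small bookkeeping with $\Phi(0)$ and $\Phi(\pi)$ in the third paragraph. Everything else is the routine computation described in the second paragraph.
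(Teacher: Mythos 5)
Your proposal is correct and follows essentially the same route as the paper: the paper's proof also conjugates by the multiplication operator $W\colon\mathbf y\mapsto\operatorname{diag}\big(e^{i\varphi(x)},e^{i\psi(x)}\big)\mathbf y$, verifies the identity $W^{-1}\ell_P(W\mathbf f)=\ell_{\widetilde P}(\mathbf f)+\gamma\mathbf f$, checks that $W^{-1}\mathfrak D(\mathcal L_{P,M})=\mathfrak D(\mathcal L_{\widetilde P,M})$, and uses $W(0)=I$ together with the fact that $W(\pi)$ is a scalar matrix to transform the boundary conditions. The only (shared) point of carelessness is the sign of the exponent in the scalar factor relating $D$ and $\widetilde D$ (and in $\widetilde p_2,\widetilde p_3$), where the paper's own displayed computation and its formula \eqref{sim} already disagree by a sign; this is an inessential convention issue.
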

\begin{proof}
Рассмотрим в пространстве $\mathbb H$ оператор умножения на матрицу $W(x)$
$$
W:\mathbf y\mapsto\begin{pmatrix}e^{i\varphi(x)} & 0\\
0 &e^{i\psi(x)}\end{pmatrix}\begin{pmatrix}y_1(x)\\
y_2(x)\end{pmatrix}.
$$
Заметим, что этот оператор ограничен, поскольку функции $\varphi$
и $\psi$ абсолютно непрерывны, и ограниченно обратим. Тогда
\begin{multline*}
W^{-1}\ell_{P}(W\mathbf f)=W^{-1}BW\mathbf f'+\left(W^{-1}PW+W^{-1}BW'\right)\mathbf f=\\
=\begin{pmatrix}-i&0\\0&i\end{pmatrix}\mathbf f'+
\begin{pmatrix}p_1&p_2e^{i(\psi-\varphi)}\\p_3e^{i(\varphi-\psi)}&p_4\end{pmatrix}\mathbf f+
\begin{pmatrix}\varphi'&0\\0&-\psi'\end{pmatrix}\mathbf f=\ell_{\widetilde P}(\mathbf f)+\gamma\mathbf f.
\end{multline*}
Остается найти область определения оператора $W^{-1}\mathcal L_{P,U}W$. Заметим, что если $\mathbf y\in AC[0,\pi]$, то и $W^{-1}\mathbf y\in AC[0,\pi]$; если
$\ell_P(\mathbf y)\in\mathbb H$, то и $\ell_{\widetilde P}(W^{-1}\mathbf y)=W^{-1}\ell_P(\mathbf y)\in\mathbb H$, так что для максимального оператора $W^{-1}\mathfrak
D(\mathcal L_{P,M})=\mathfrak D(\mathcal L_{\widetilde P,M})$. Легко видеть, что
$$
W(0)=I,\quad \text{а} \quad W(\pi)=\exp\left(\frac{i}2\int_0^\pi(p_4(t)-p_1(t))dt\right)I.
$$
Если $\mathbf z\in\mathfrak D(\mathcal L_{\widetilde P,\widetilde U})$, то $\mathbf z=W^{-1}\mathbf y$, где $\mathbf y\in\mathfrak D(\mathcal L_{P,U})$. Тогда краевые
условия принимают вид
\begin{multline*}
C\mathbf y(0)+D\mathbf y(\pi)=0\ \Longleftrightarrow\
CW(0)\mathbf z(0)+DW(\pi)\mathbf z(\pi)=0\ \Longleftrightarrow\\
\Longleftrightarrow C\mathbf z(0)+\exp\left(\frac{i}2\int_0^\pi(p_1(t)-p_4(t))dt\right)D\mathbf z(\pi)=0,
\end{multline*}
т.е.
$$
\widetilde{\mathcal{U}}=(\widetilde C,\,\widetilde D),\quad \text{где}\ \ \widetilde C=C,\ \text{а}\ \widetilde
D=\exp\left(\frac{i}2\int_0^\pi(p_1(t)-p_4(t))dt\right)D.
$$
\end{proof}
Всюду далее в работе мы будем считать, что преобразования уже проведены (при этом спектральный параметр $\lambda$ мы заменяем на $\lambda+\gamma$). Таким образом, мы
будем рассматривать оператор, порожденный дифференциальным выражением \eqref{eq:0.lP}, где матрица $P(x)$ имеет вид
\begin{equation} \label{P}
P(x)=\begin{pmatrix}0&p_2(x)\\p_3(x)&0\end{pmatrix}, \qquad p_2(x),\ p_3(x)\in L_1[0, \pi],
\end{equation}
и регулярными краевыми условиями \eqref{matrU}.
%Мы также выделим сильно регулярные операторы Дирака.
\begin{definition}\label{def:2}
Оператор Дирака $\mathcal L_{P,U}$ называется {\it сильно регулярным}, если он регулярен и к тому же $(J_{12}+J_{34})^2+4J_{14}J_{23}\ne0$.
\end{definition}
%Отметим, что давать определение сильно регулярного краевого
%условия корректно только в случае $p_1\equiv0$. Иначе, при одних и
%тех же краевых условиях оператор Дирака может удовлетворять или не
%удовлетворять условию сильной регулярности в зависимости от
%потенциала.

Мы будем сравнивать асимптотическое поведение собственных значений и собственных функций оператора $\mathcal L_{P,U}$ и оператора $\mathcal L_{0,U}$. Рассмотрим
оператор $\mathcal L_{0,U}$, порожденный дифференциальным выражением $\ell_0(\mathbf y)=B\mathbf y'$ и регулярным краевым условием $U(\mathbf y)=0$ вида
\eqref{matrU}.
\begin{proposition}\label{tm:0.3}
Спектр оператора $\mathcal L_{0,U}$ состоит из собственных значений, которые можно записать двумя сериями $-\frac{i}{\pi}\ln z_0+2n$ и $-\frac{i}{\pi}\ln z_1+2n$,
$n\in\mathbb Z$, где $z_0$ и $z_1$ --- корни квадратного уравнения
\begin{equation}\label{sqeq}
J_{23}z^2-[J_{12}+J_{34}]z-J_{14}=0,
\end{equation}
а значения ветви логарифма фиксируются в полосе $\mathrm{Im\,}\, z\in(-\pi,\pi]$.
%В случае, если дискриминант квадратного уравнения \eqref{sqeq} равен нулю, все
%собственные значения оператора $\mathcal L_{0,U}$ двукратны.
\end{proposition}
В дальнейшем мы будем нумеровать эти собственные значения одним индексом $n\in\mathbb Z$, объединяя две серии в одну:
\begin{equation}\label{eq:la0}
\lambda^0_n=\begin{cases} \varkappa_0+n,\quad \text{для четных }n,\\ \varkappa_1+n,\quad \text{для нечетных }n,\end{cases}\quad\text{где}\quad
\varkappa_0=-\frac{i}{\pi}\ln z_0,\quad \varkappa_1=-\frac{i}{\pi}\ln z_1-1,
\end{equation}
причем $-1<\mathrm{Re\,}\varkappa_0\leqslant\mathrm{Re\,}\varkappa_1+1\leqslant1$. В случае $\mathrm{Re\,}\varkappa_0=\mathrm{Re\,}\varkappa_1+1$ для определенности
будем считать, что $\mathrm{Im\,}\varkappa_0\leqslant\mathrm{Im\,}\varkappa_1$.

Доказательство этого утверждения, так же как и другие сведения об операторе $\mathcal L_{0,U}$, можно найти в работе П.~Джакова и Б.~Митягина \cite{DM2}. Мы, однако,
приведем их здесь для удобства читателя.
\begin{proof}
Решениями уравнения $\ell_0(\mathbf y)=\lambda\mathbf y$ с начальными условиями $(1,\,0)^t$ и $(0,\,1)^t$ являются функции $\mathbf e_1^0(x,\lambda)=(e^{i\lambda
x},\,0)^t$ и $\mathbf e_2^0(x,\lambda)=(0,\, e^{-i\lambda x})^t$ соответственно, а общее решение имеет вид $\mathbf y=\omega_1^0\mathbf e_1^0+\omega_2^0\mathbf
e_2^0$. Подставляя это выражение в краевые условия получаем систему
\begin{equation}\label{eigeneq}
\begin{cases}[u_{11}+u_{13}e^{i\pi\lambda}]\omega_1^0+[u_{12}+u_{14}e^{-i\pi\lambda}]\omega_2^0=0,\\
[u_{21}+u_{23}e^{i\pi\lambda}]\omega_1^0+[u_{22}+u_{24}e^{-i\pi\lambda}]\omega_2^0=0.\end{cases}
\end{equation}
Обозначим матрицу этой системы через $M_0(\lambda)$. Число $\lambda\in\mathbb C$ является собственным значением оператора $\mathcal L_{0,U}$ тогда и только тогда,
когда определитель $\Delta_0(\lambda): = \det M_0(\lambda)$ обращается в ноль. Непосредственными вычислениями получаем
\begin{equation}\label{Delta0}
\Delta_0(\lambda)=[J_{12}+J_{34}]-J_{23}e^{i\pi\lambda}+J_{14}e^{-i\pi\lambda}.
\end{equation}
Остается сделать в этом уравнении подстановку $e^{i\pi\lambda}=z$.
\end{proof}
\begin{proposition}\label{tm:0.4}
Нормированные собственные функции $\mathbf y^0_n$, $n\in\mathbb Z$, сильно регулярного оператора $\mathcal L_{0,U}$ имеют вид
\begin{equation}\label{eigenfunc}
\mathbf y^0_n=\omega^0_{1,j}\big(e^{i\lambda^0_nx},\,0\big)^t+ \omega^0_{2,j}\big(0,\,e^{-i\lambda^0_nx}\big)^t,\quad n\in\mathbb Z,\quad\text{где}
\end{equation}
$j=0$ при четном $n$ и $j=1$ при нечетном $n$. Числа $\omega^0_{i,j}$, где $i=1,\,2$, а $j=0,\,1$, определяются матрицей $\mathcal U$.
\end{proposition}
\begin{proof}
Собственные функции, введенные в доказательстве предыдущего утверждения, имеют вид $\omega_{1,n}^0\mathbf e^0_1(x,\lambda^0_n)+\omega_{2,n}^0\mathbf
e^0_2(x,\lambda^0_n)$. При этом числа $\omega_{1,n}^0$ и $\omega_{2,n}^0$ есть решения системы \eqref{eigeneq}, в которой $\lambda=\lambda^0_n$. Поскольку матрица
этой системы $2$--периодична по параметру $\lambda$, а $\lambda_{n+2}^0-\lambda_n^0=2$, то числа $\omega_{1,n}^0$ и $\omega_{2,n}^0$ зависят лишь от четности индекса
$n$. Обозначим их $\omega_{1,j}^0$ и $\omega_{2,j}^0$, где $j=0$ при четном $n$ и $j=1$ при нечетном $n$. Остается нормировать собственные функции. Так как $\mathbf
e^0_1(x,\lambda^0_n)=(e^{i\lambda_n^0x},0)^t$ и $\mathbf e^0_2(x,\lambda^0_n)=(0,e^{-i\lambda_n^0x})^t$, то
\begin{multline*}
\left\|\omega^0_{1,j}\mathbf e^0_1(x,\lambda^0_n)+ \omega^0_{2,j}\mathbf e^0_2(x,\lambda_n^0)\right\|_{\mathbb H}^2=
|\omega^0_{1,j}|^2\int_0^\pi|e^{i\lambda_n^0x}|^2\,dx+|\omega^0_{2,j}|^2\int_0^\pi|e^{-i\lambda_n^0x}|^2\,dx=\\
=|\omega^0_{1,j}|^2\int_0^\pi|e^{i\varkappa_j^0x}|^2\,dx+|\omega^0_{2,j}|^2\int_0^\pi|e^{-i\varkappa_j^0x}|^2\,dx.
\end{multline*}
Последнее выражение зависит только от четности $n$, а значит, после нормировки получим \eqref{eigenfunc} с некоторыми новыми $\omega^0_{i,j}$, $i=1,\,2$, которые
по--прежнему зависят только от четности $n$.
\end{proof}
\begin{remark}
Если оператор $\mathcal L_{0,U}$ сильно регулярен, то дискриминант квадратного уравнения \eqref{sqeq} отличен от нуля и корни $z_0,\, z_1$ различны. Корневые
подпространства регулярного, но не сильно регулярного оператора $\mathcal L_{0,U}$, отвечающие каждому собственному значению, двумерны. При этом возможны два случая
--- либо в каждом подпространстве есть базис из двух собственных
функций оператора $\mathcal L_{0,U}$, либо каждое подпространство содержит ровно один (с точностью до множителя) собственный вектор.
\end{remark}

\section{Асимптотические формулы.}

Обозначим через
\begin{equation}\label{matrE}
E(x,\lambda)=\begin{pmatrix}e_{11}(x,\lambda)&e_{12}(x,\lambda)\\ e_{21}(x,\lambda)&e_{22}(x,\lambda)\end{pmatrix},\qquad \mathbf
e_1(x,\lambda)=\begin{pmatrix}e_{11}(x,\lambda)\\ e_{21}(x,\lambda)\end{pmatrix},\ \mathbf e_2(x,\lambda)=\begin{pmatrix}e_{12}(x,\lambda)\\
e_{22}(x,\lambda)\end{pmatrix},
\end{equation}
матрицу фундаментальной системы решений уравнения $\ell_P(\mathbf y)=\lambda\mathbf y$ с начальными условиями $E(0,\lambda)=I$. Для исследования регулярного оператора
$\mathcal L_{P,U}$ мы воспользуемся результатами об асимптотическом поведении фундаментальной системы решений \eqref{matrE} в комплексной $\lambda$--плоскости внутри
полос $\Pi_\alpha=\{\lambda\in\mathbb C\,\vert\ |\mathrm{Im\,} \lambda|<\alpha\}$, полученными авторами в \cite{SaSa1}. В этой работе рассматривался оператор
$\mathcal L_{Q,U}$, записанный в форме \eqref{eq:0.lQ}, а оценки остаточных членов в асимптотических формулах были получены для потенциала $Q$ из пространств $L_p$,
$p\in[1,\infty]$. Здесь нам потребуются только результаты для случая $p=1$, причем мы переформулируем их для системы Дирака, записанной в форме \eqref{eq:0.lP} с
матрицей \eqref{P}.

Положим
\begin{align}
&e_{11}(x,\lambda)=e^{i\lambda x}+\rho_{11}(x,\lambda),\qquad & &e_{21}(x,\lambda)=\rho_{21}(x,\lambda),\notag\\
&e_{12}(x,\lambda)=\rho_{12}(x,\lambda),\qquad & &e_{22}(x,\lambda)=e^{-i\lambda x}+\rho_{22}(x,\lambda).\label{eq:rhodef}
\end{align}
Заметим сразу, что $\rho_{j,k}(0,\lambda)=0$, $j,\,k\in\{1,2\}$.

\begin{THEOREM}\label{B}
Пусть $P(x)$ имеет вид \eqref{P}, а $\alpha>0$
--- произвольное фиксированное число. Тогда
\begin{align}
\rho_{j,k}(x,\lambda)\to0\ j,\,k\in\{1,\,2\},\quad\text{при }\Pi_\alpha\ni\lambda\to\infty\label{stripas}
\end{align}
равномерно по $x\in[0,\pi]$. Более того, найдется такое число $\beta=\beta(P,\alpha)>0$, что для всех $\lambda\in\Pi_{\alpha,\beta}:=\left\{\lambda\in\Pi_\alpha:
|\mathrm{Re\,}\lambda|>\beta\right\}$
\begin{equation}\label{stripas2}
\rho_{1k}(x,\lambda)=\eta_{1k}(x,\lambda)e^{i\lambda x},\qquad\rho_{2k}(x,\lambda)=\eta_{2k}(x,\lambda)e^{-i\lambda x},\quad k=1,\,2,
\end{equation}
причем почти всюду на $[0,\pi]$ выполнены оценки
\begin{align}
&\sup_{\lambda\in \Pi_{\alpha,\beta}}\left|\left(\eta_{11}(x,\lambda)\right)'_x\right|\leqslant M|p_2(x)|,\qquad & &\sup_{\lambda\in
\Pi_{\alpha,\beta}}\left|\left(\eta_{12}(x,\lambda)\right)'_x\right|\leqslant
M|p_2(x)|,\notag\\
&\sup_{\lambda\in \Pi_{\alpha,\beta}}\left|\left(\eta_{21}(x,\lambda)\right)'_x\right|\leqslant M|p_3(x)|,\qquad & &\sup_{\lambda\in
\Pi_{\alpha,\beta}}\left|\left(\eta_{22}(x,\lambda)\right)'_x\right|\leqslant M|p_3(x)|
\end{align}
для некоторого $M=M(P,\alpha)$.
\end{THEOREM}

Асимптотическое поведение функций $\mathbf e_j(x,\lambda)$ вне полос $\Pi_\alpha$ в работе \cite{SaSa1} не изучалось. Применяя метод, аналогичный методу,
использованному в этой работе, несложно получить асимптотические представления для $\mathbf e_j(x,\lambda)$, $j=1,\,2$, в секторах
$$
S_1=\{\lambda\in\mathbb C: \varepsilon<\arg\lambda<\pi-\varepsilon\}\quad\text{и}\quad S_2=\{\lambda\in\mathbb C: -\pi+\varepsilon<\arg\lambda<-\varepsilon\},
$$
где $\varepsilon\in(0,\pi/2)$ произвольно. Более того, можно получить квалифицированную оценку остаточных членов в этих представлениях в зависимости от  индекса $p$
пространства $L_p[0,\pi]\ni p_j$, $j=2,\,3$. Здесь, однако, нас интересует только случай $p=1$, и потому мы воспользуемся результатом работы \cite{MalOri}. В ней
изучался случай общей системы $B\mathbf y'+P\mathbf y$ в пространстве $(L_2[0,\pi])^n$. Мы сформулируем здесь теорему 2.2 этой работы для случая системы Дирака.

\begin{THEOREM}\label{C}
Пусть матрица $P(x)$ имеет вид \eqref{P}. Существует матрица $Y(x,\lambda)$ фундаментальной системы решений уравнения $\ell_P(\mathbf y)=\lambda\mathbf y$, элементы
которой $y_{jk}(x,\lambda)$ являются целыми функциями параметра $\lambda$  с ограничением на рост $|y_{jk}(x,\lambda)|\leqslant Me^{x|\lambda|}$, где $M$ не зависит
от $x$ и $\lambda$. Кроме того, $Y(x,\lambda)$ имеет асимптотическое представление
\begin{equation}\label{eq:Malamud}
Y(x,\lambda)=\begin{pmatrix}e^{i\lambda x}(1+o(1))&e^{-i\lambda x}\cdot o(1)\\
e^{i\lambda x}\cdot o(1)&e^{-i\lambda x}(1+o(1))\end{pmatrix}
\end{equation}
при $\lambda\to\infty$ в секторах $S_1$ и $S_2$ равномерно по $x\in[0,\pi]$.
\end{THEOREM}

Нам необходимо выяснить асимптотическое поведение функций $\mathbf e_1(x,\lambda)$ и $\mathbf e_2(x,\lambda)$ при $\lambda\to\infty$ во всей комплексной плоскости.
Для этого мы воспользуемся теоремами \ref{B} и \ref{C}, а также фактом из теории целых функций, сформулированным ниже. Этот факт хорошо известен специалистам, но
%поскольку нам
%необходимо получить асимптотические представления равномерно по $x\in[0,\pi]$ а также
для полноты изложения мы приведем его с доказательством, опираясь на следующее утверждение (см. \cite{Lin}).
\begin{THEOREM}\label{tm:FrLin}
Пусть $D\in\mathbb C$ --- ограниченная область, а функция $f(z)$ голоморфна в $D$ и непрерывна в $\overline{D}$. Пусть далее $\zeta\in D$, $U_\rho=\{z\in\mathbb C:
|z-\zeta|\leqslant\rho\}$, причем на окружности $|z-\zeta|=\rho$ имеется дуга, не принадлежащая $D$, длина которой $l\geqslant2\pi\rho/n$ для некоторого натурального
$n$. Пусть $|f(z)|\leqslant M_0$ для всех $z\in\partial D\cap \overline{U_\rho}$ и $|f(z)|\leqslant M$ для всех остальных точек $z\in\partial D$. Тогда
$|f(\zeta)|\leqslant M_0^{1/n}M^{1-1/n}$.
\end{THEOREM}
\begin{lemma}\label{FrLin}
Пусть $f$ --- целая функция, а $S=\{z\in\mathbb C: \arg z\in(0,\,\pi/2)\}$. Обозначим
\begin{gather*}
M_0(r)=\sup\{|f(z)|: \arg z=0,\ |z|\geqslant r\},\\
M_1(r)=\sup\{|f(z)|: \arg z\in[0,\pi/6],\ |z|\geqslant r\},\\
M=\sup\{|f(z)|: z\in\overline{S}\}.
\end{gather*}
Тогда $M_1(r)\leqslant M_0(r/4)^{1/4}\cdot M^{3/4}$.
\end{lemma}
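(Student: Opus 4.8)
The plan is to obtain the inequality as a single application of the two-constants theorem, Theorem~\ref{tm:FrLin}, to the sector $D=S$ itself, with the integer parameter $n=4$; the exponents $1/4$ and $3/4$ are then precisely $1/n$ and $1-1/n$. I would first note that we may assume $M<\infty$ and $M_0(r/4)<\infty$, since otherwise the asserted bound is trivial (and if $M_0(r/4)=0$ then $f$ vanishes on a ray, hence identically). Being entire, $f$ is analytic on $S$ and continuous on $\overline S$, and it is bounded there by $M$; so Theorem~\ref{tm:FrLin} is applicable on this (unbounded) sector.

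Next I would dispose of the boundary ray: if $\zeta$ is real with $\zeta\geqslant r$, then $|f(\zeta)|\leqslant M_0(r)\leqslant M_0(r/4)\leqslant M_0(r/4)^{1/4}M^{3/4}$, using that $M_0(\cdot)$ is nonincreasing and that $M_0(r/4)\leqslant M$ (the relevant ray lying in $\overline S$). For the main case, fix $\zeta=Re^{i\theta}$ with $\theta\in(0,\pi/6]$ and $R=|\zeta|\geqslant r$, and take $\rho=\tfrac34 R$, $U_\rho=\{z:|z-\zeta|\leqslant\rho\}$. The crucial geometric facts to verify are: (a) since $\cos\theta\geqslant\tfrac{\sqrt3}{2}>\tfrac34$, every point of $\overline{U_\rho}$ has real part $\geqslant R\cos\theta-\rho>0$, so $\overline{U_\rho}$ meets $\partial S$ only along the ray $\{\arg z=0\}$, where it is the segment $[a,b]$ with $a=R(\cos\theta-\sqrt{9/16-\sin^2\theta})\geqslant R/4\geqslant r/4$ (the inequality $a\geqslant R/4$ reducing, after squaring, to $\cos\theta\leqslant1$); hence $\partial S\cap\overline{U_\rho}\subseteq\{\arg z=0,\ |z|\geqslant r/4\}$, on which $|f|\leqslant M_0(r/4)$; and (b) since the circle $|z-\zeta|=\rho$ lies in $\{\mathrm{Re\,}z>0\}$ and, because $\mathrm{Im\,}\zeta=R\sin\theta\leqslant R/2<\rho$, crosses the real axis in exactly two points, the part of this circle lying outside $\overline S$ is exactly its sub-arc with negative imaginary part, a single arc of angular measure $\pi-2\arcsin(\tfrac43\sin\theta)\geqslant\pi-2\arcsin(\tfrac23)>\tfrac\pi2$, hence of length greater than $\tfrac{\pi\rho}{2}=\tfrac{2\pi\rho}{4}$.

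Granting (a) and (b), Theorem~\ref{tm:FrLin} applies with $D=S$, this $\zeta$ and $\rho$, $n=4$, the boundary bound on $\partial S\cap\overline{U_\rho}$ taken to be $M_0(r/4)$ and the boundary bound on the remaining part of $\partial S\subseteq\overline S$ taken to be $M$; it yields $|f(\zeta)|\leqslant M_0(r/4)^{1/4}M^{3/4}$. Since this bound is independent of $\zeta$, taking the supremum over all $\zeta$ with $\arg\zeta\in[0,\pi/6]$ and $|\zeta|\geqslant r$ gives $M_1(r)\leqslant M_0(r/4)^{1/4}M^{3/4}$. I expect the only step requiring genuine care to be (b): one must be sure that, with the choice $\rho=\tfrac34 R$, the complement of $\overline S$ cuts the circle $|z-\zeta|=\rho$ in a single arc of angular measure at least $\pi/2$ for every admissible $\theta$, and it is precisely the value $\pi/6$ in the statement (which forces $\cos\theta>3/4$, keeping the whole circle in the right half-plane and controlling $\arcsin(\tfrac43\sin\theta)$) that makes this hold, with a small margin to spare. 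Everything else is routine trigonometry.
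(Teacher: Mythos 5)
Your argument is correct and follows essentially the same route as the paper's proof: a single application of Theorem~\ref{tm:FrLin} with $n=4$ to a disk centred at $\zeta$ whose intersection with $\partial S$ is a segment of the positive real axis contained in $\{x\geqslant r/4\}$, and whose bounding circle leaves $\overline S$ along an arc of length at least $2\pi\rho/4$. The only (immaterial) difference is the choice of radius: the paper takes $\rho=|\zeta|\sqrt{2}\sin\alpha$, so that the segment is seen from $\zeta$ under exactly a right angle, whereas you take $\rho=\tfrac34|\zeta|$ and check that the angle exceeds $\pi/2$; both choices give a left endpoint $\geqslant|\zeta|/4\geqslant r/4$ and hence the same bound.
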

\begin{proof}
Возьмем точку $\zeta=re^{i\alpha}$, где $\alpha\in[0,\pi/6]$. Случай $\alpha=0$ тривиален, поскольку $M_0(r)\leqslant M_1(r)$, так что далее считаем $\alpha>0$.
Рассмотрим окружность радиуса $\rho=r\sqrt{2}\sin\alpha$ с центром в точке $\zeta$. Легко видеть, что эта окружность лежит в правой полуплоскости, пересекает
вещественную ось в точках $x_\pm=r(\cos\alpha\pm\sin\alpha)$, причем отрезок $[x_-,x_+]$ виден из точки $\zeta$ под прямым углом. Применим теорему \ref{tm:FrLin} к
функции $f(z)$ и области $D=\{z: |z-\zeta|<\rho\}\cap S$. Тогда
$$
|f(\zeta)|\leqslant\left(\max_{z\in[x_-,x_+]}|f(z)|\right)^{1/4}\cdot M^{3/4}.
$$
Учитывая, что $\alpha\leqslant\pi/6$, получаем, что $x_->r/4$, откуда следует
утверждение леммы.
\end{proof}
\begin{theorem}\label{tm:0.6}
Функции $e_{ij}(x,\lambda)$ аналитичны по $\lambda$ во всей комплексной плоскости и
\begin{equation}\label{mainasc}
E(x,\lambda)=\left(\begin{array}{cc}e^{i\lambda x}\cdot(1+o(1))+e^{-i\lambda x}\cdot o(1)&\quad e^{i\lambda x}\cdot o(1)+e^{-i\lambda x}\cdot o(1)\\
e^{i\lambda x}\cdot o(1)+e^{-i\lambda x}\cdot o(1)&\quad e^{i\lambda x}\cdot o(1)+e^{-i\lambda x}\cdot (1+o(1))
\end{array}\right)
\end{equation} при $\mathbb C\ni\lambda\to\infty$ равномерно по $x\in[0,\pi]$.
\end{theorem}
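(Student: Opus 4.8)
\emph{Strategy and entireness.} The plan is: prove that each $e_{ij}(x,\cdot)$ is entire directly from the integral‑equation construction, and then obtain \eqref{mainasc} by covering $\{|\lambda|>R\}$ by three regions on which the asymptotics are already known — the strip $\Pi_\alpha$ (Theorem \ref{B}) and the sectors $S_1,S_2$ (Theorem \ref{C}) — plus four narrow curvilinear wedges near the real semi‑axes, on which I would run a Phragmén–Lindelöf argument built on Lemma \ref{FrLin}. For entireness, note that $E(x,\lambda)$ solves $E'=B^{-1}(\lambda I-P(x))E$, $E(0,\lambda)=I$, with coefficient matrix affine in $\lambda$ and $L_1$ in $x$; hence (Theorem \ref{A}) $E$ is given by a Picard series converging locally uniformly in $\lambda$ whose terms are polynomials in $\lambda$, so every $e_{ij}(x,\cdot)$ is entire, and Gronwall's inequality applied to the same equation gives the a priori bound $|e_{ij}(x,\lambda)|\le Me^{x|\lambda|}$, $M=M(P)$ (this bound is also in Theorem \ref{C}).

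\emph{The strip and the sectors.} Fix $\alpha>0$ and $\varepsilon\in(0,\pi/6]$. On $\Pi_\alpha\times[0,\pi]$ the factors $e^{\pm i\lambda x}$ are bounded above and below by positive constants, so \eqref{eq:rhodef} together with \eqref{stripas} of Theorem \ref{B} (i.e.\ $\rho_{jk}\to0$ uniformly in $x$) immediately puts each entry into the form \eqref{mainasc} in $\Pi_\alpha$ — e.g.\ $\rho_{11}=e^{-i\lambda x}\,(e^{i\lambda x}\rho_{11})$ with $e^{i\lambda x}\rho_{11}=o(1)$. In $S_1$ and $S_2$ the asymptotics \eqref{eq:Malamud} of Theorem \ref{C}, read off for $E=Y\,Y(0,\lambda)^{-1}$ (with $Y(0,\lambda)=I+o(1)$ there), are exactly \eqref{mainasc} with one of the two $o(1)$'s vanishing in each entry.

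\emph{The wedges.} For $|\lambda|>R$ the complement of $\Pi_\alpha\cup S_1\cup S_2$ consists of four wedges of angular opening $\le\varepsilon<\pi$ near the positive and negative real semi‑axes, each bounded by a ray of $\partial\Pi_\alpha$ and a ray of $\partial S_1$ or $\partial S_2$; by symmetry it suffices to treat $G_+=\{\lambda:\mathrm{Im}\,\lambda\ge\alpha,\ 0\le\arg\lambda\le\varepsilon\}$, on which $e^{i\lambda x}$ is subdominant, $|e^{i\lambda x}|\le1$. For the four entries I would take the functions, entire in $\lambda$, $g_{11}=e_{11}-e^{i\lambda x}=\rho_{11}$, $g_{21}=e_{21}$, $g_{12}=e^{i\lambda x}e_{12}$, $g_{22}=e^{i\lambda x}\rho_{22}$; each satisfies $|g(x,\lambda)|\le Ce^{x|\lambda|}$ on $G_+$ (so $g$ is of exponential type $\le\pi$ in $\lambda$), tends to $0$ uniformly in $x$ on the $\partial\Pi_\alpha$‑ray by Theorem \ref{B} (applied on a slightly wider strip), and tends to $0$ uniformly in $x$ on the $\partial S_1$‑ray by Theorem \ref{C} (applied on a slightly wider sector; there $|e^{i\lambda x}|\le1$, so e.g.\ $e^{i\lambda x}e_{12}=e^{i\lambda x}\cdot e^{-i\lambda x}o(1)=o(1)$). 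Since the opening of the sector $\{0<\arg\lambda<\varepsilon\}\supset G_+$ is $<\pi$ and $g$ has order $1$, the classical Phragmén–Lindelöf principle shows $g$ is bounded there, hence — together with its boundedness on $S_1$ and on compacta — bounded on the closed first quadrant $\overline{S}$. Lemma \ref{FrLin}, applied to $g(x,\cdot)$ (the quantity $M_0(r)\to0$ as $r\to\infty$ coming from Theorem \ref{B} on the positive real ray), then gives $g\to0$ uniformly on $\{0\le\arg\lambda\le\pi/6\}\supset G_+$. Rewriting each $g$ as $e^{-i\lambda x}$ times the corresponding bounded‑and‑small quantity and using $\mathrm{Im}\,\lambda\ge0$, $|e^{i\lambda x}|\le1$ to carry the smallness across the exponential factor yields $e_{11}=e^{i\lambda x}+e^{-i\lambda x}o(1)$, $e_{12}=e^{-i\lambda x}o(1)$, $e_{21}=e^{-i\lambda x}o(1)$, $e_{22}=e^{-i\lambda x}(1+o(1))$ on $G_+$, i.e.\ \eqref{mainasc}. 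The remaining three wedges are handled the same way, interchanging $e^{i\lambda x}\leftrightarrow e^{-i\lambda x}$, $S_1\leftrightarrow S_2$, and using the appropriate quadrant (and the correspondingly reflected form of Lemma \ref{FrLin}); combining the three pieces with the wedges gives \eqref{mainasc} uniformly in $x$ as $\lambda\to\infty$ in $\mathbb C$.

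\emph{Main obstacle.} The delicate point is the wedge step: the auxiliary functions $g$ must be chosen so as to be simultaneously of finite exponential type on the wedge (to feed Phragmén–Lindelöf) and \emph{uniformly} in $x\in[0,\pi]$ small on both bounding rays, so the uniformity in $x$ of \eqref{stripas}, \eqref{stripas2} and \eqref{eq:Malamud} is exactly what is used; one must also verify that $M=\sup_{\overline S}|g(x,\cdot)|$ stays bounded and that $M_0(r)\to0$ uniformly in $x$, which again follows from those uniform estimates together with the continuity of the $e_{ij}$ in $(x,\lambda)$.
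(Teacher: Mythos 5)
Your proposal is correct and takes essentially the same route as the paper: entireness and the bound $|e_{ij}(x,\lambda)|\leqslant Me^{x|\lambda|}$ come from Theorem \ref{C}, the asymptotics hold in $\Pi_\alpha$ by Theorem \ref{B} and in $S_1,S_2$ by Theorem \ref{C}, and the four narrow sectors adjacent to the real axis are filled in by Lemma \ref{FrLin}; the paper simply uses the single auxiliary function $f(z)=\rho_{jk}(x,z)e^{ixz}$ where you use per-entry variants differing from it only by bounded factors $e^{\pm i\lambda x}$. Your explicit preliminary Phragm\'en--Lindel\"of step (exponential type plus boundedness on the two bounding rays of a wedge of opening less than $\pi$ implies boundedness inside, which is needed before Lemma \ref{FrLin} can be invoked) is a detail the paper only asserts, so your write-up is, if anything, slightly more complete.
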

\begin{proof}
Из теоремы \ref{C} следует, что функции $e_{jk}(x,\lambda)$ являются целыми функциями с ограничением на рост $|e_{jk}(x,\lambda)|\leqslant Me^{x|\lambda|}$. Матрица
$E(x, \lambda)$, определенная в \eqref{matrE}, имеет вид
$$
E(x, \lambda)=Y^{-1}(0,\lambda)Y(x,\lambda).
$$
Тогда из \eqref{eq:Malamud} следует \eqref{mainasc} при $S_j\ni\lambda\to\infty$ равномерно по $x\in[0,\pi]$. В то же время, представление \eqref{stripas} влечет
\eqref{mainasc} на лучах $\arg\lambda=0$ и $\arg\lambda=\pi$. Для завершения доказательства теоремы нам достаточно показать, что представление \eqref{mainasc}
справедливо также в секторах
\begin{gather*}
S_3=\{\lambda\in \mathbb C : \arg\lambda\in[0, \pi/6]\},\quad S_4=\{\arg\lambda\in[5\pi/6, \pi]\},\\
S_5=\{\arg\lambda\in[-\pi, -5\pi/6]\},\quad\text{и}\quad S_6=\{\arg\lambda\in[-\pi/6, 0]\}.
\end{gather*}
Рассмотрим сектор $S_3$ (остальные три случая разбираются аналогично). Пусть $\rho_{jk}(x,\lambda)$, $j,\,k=1,\,2$, --- функции, введенные в \eqref{stripas}.
Зафиксируем произвольную пару индексов $j$, $k$ и точку $x\in[0,\pi]$ и обозначим $f(z)=\rho_{jk}(x,z)e^{ixz}$. Тогда $f(z)$ является целой функцией, причем
$|f(z)|\leqslant M$ в секторе $S_3$, а на положительном луче вещественной оси $f(z)=o(1)$ при $z\to\infty$ равномерно по $x\in [0, \pi]$. Согласно лемме \ref{FrLin},
$f(z)=o(1)$ в секторе $S_3$ при $|z|\to\infty$ равномерно по $x\in [0, \pi]$.
\end{proof}
Теорема \ref{B} и теорема \ref{tm:0.6} позволяют получить асимптотические формулы для характеристического определителя оператора $\mathcal L_{P,U}$ с потенциалом вида
\eqref{P} и регулярными краевыми условиями.
\begin{definition}\label{def.3}
Пусть потенциал $P\in L_1$, краевые условия заданы матрицей $\mathcal U$, а функции $\mathbf e_1(x,\lambda)$ и $\mathbf e_2(x,\lambda)$ определены в
\eqref{matrE}. {\it Характеристическим определителем} $\Delta(\lambda)$ оператора $\mathcal L_{P,U}$ называется детерминант матрицы
\begin{equation}\label{matrM}
M(\lambda)=\begin{pmatrix}u_{11}+u_{13}e_{11}(\pi,\lambda)+u_{14}e_{21}(\pi,\lambda)&
u_{12}+u_{13}e_{12}(\pi,\lambda)+u_{14}e_{22}(\pi,\lambda)\\
u_{21}+u_{23}e_{11}(\pi,\lambda)+u_{24}e_{21}(\pi,\lambda)& u_{22}+u_{23}e_{12}(\pi,\lambda)+u_{24}e_{22}(\pi,\lambda)
\end{pmatrix}.
\end{equation}
\end{definition}
\begin{proposition}\label{tm:0.7}
Пусть потенциал $P$ имеет вид \eqref{P}, а краевые условия $U$ регулярны. Пусть $\Delta(\lambda)$ --- характеристический определитель оператора $\mathcal L_{P,U}$, а
$\Delta_0(\lambda)$ --- характеристический определитель оператора $\mathcal L_{0,U}$. Тогда при $\lambda\to\infty$ в произвольной полосе $\Pi_\alpha$ справедливо
асимптотическое представление
$$
\Delta(\lambda)=\Delta_0(\lambda)+o(1).
$$
Кроме того, найдется такая полоса $\Pi_{\alpha_0}$, что при $\lambda\to\infty$ вне этой полосы, справедливо асимптотическое представление
$$
\Delta(\lambda)=\Delta_0(\lambda)(1+o(1)).
$$
\end{proposition}
\begin{proof}
Определитель матрицы $M(\lambda)$ имеет вид
\begin{multline}\label{15}
\Delta(\lambda)=J_{12}+J_{13}e_{12}(\pi,\lambda)+J_{14}e_{22}(\pi,\lambda)+
J_{32}e_{11}(\pi,\lambda)+J_{42}e_{21}(\pi,\lambda)+\\
+J_{34}(e_{11}(\pi,\lambda)e_{22}(\pi,\lambda)-e_{12}(\pi,\lambda)e_{21}(\pi,\lambda))
\end{multline}
(напомним, что через \(J_{\alpha\beta}\) мы обозначаем определитель, составленный из \(\alpha\)-го и \(\beta\)-го столбца матрицы $\mathcal U$). Заметим, что
выражение $e_{11}(x,\lambda)e_{22}(x,\lambda)-e_{12}(x,\lambda)e_{21}(x,\lambda)$ является определителем матрицы фундаментальной системы решений в точке
$x\in[0,\pi]$. Поскольку след матрицы $B^{-1}(\lambda I-P(x))$ равен нулю, то, согласно теореме Лиувилля (см., например, \cite[Гл. III \S1]{CL}), это выражение не
зависит от $x$, а при $x=0$ оно равно единице по определению функций $\mathbf e_1(x,\lambda)$ и $\mathbf e_2(x,\lambda)$.  Подставляя асимптотические формулы
\eqref{mainasc} в соотношение \eqref{15}, получим
$$
\Delta(\lambda)=J_{12}+J_{34}+J_{14}e^{-i\pi\lambda}+J_{32}e^{i\pi\lambda}+ o(1)\left(|e^{i\pi\lambda}|+|e^{-i\pi\lambda}|\right)=
\Delta_0(\lambda)+o(1)\left(|e^{i\pi\lambda}|+|e^{-i\pi\lambda}|\right).
$$
Остаточный член в этом равенстве есть $o(1)$ при $\Pi_\alpha\ni\lambda\to\infty$ для любого $\alpha>0$ и первое утверждение теоремы доказано. Докажем второе
утверждение. Разберем случай $\mathrm{Im\,}\lambda>0$. Подберем $\alpha_0>0$ так, что
$$
|J_{12}|+|J_{34}|+|J_{32}|e^{-\pi\alpha_0}\leqslant|J_{14}|\frac{e^{\pi\alpha_0}}2.
$$
Тогда при $\mathrm{Im\,}\lambda>\alpha_0$
\begin{equation}\label{Delta0est}
|\Delta_0(\lambda)|\geqslant|J_{14}e^{-i\pi\lambda}|-|J_{12}+J_{34}+J_{32}e^{i\pi\lambda}| \geqslant\frac12|J_{14}e^{-i\pi\lambda}|,
\end{equation}
а значит
$$
|\Delta_0(\lambda)|^{-1}\left(|e^{i\pi\lambda}|+|e^{-i\pi\lambda}|\right)\leqslant 2|J_{14}|^{-1}\left(1+\left|e^{-2\pi\alpha_0}\right|\right)\leqslant4|J_{14}|^{-1}.
$$
Итак,
$$
\text{при}\ \mathrm{Im\,}\lambda>\alpha_0:\quad  \Delta(\lambda)=\Delta_0(\lambda)(1+o(1)).
$$
Случай $\mathrm{Im\,}\lambda<0$ разбирается аналогично.
\end{proof}
Теперь мы покажем, что собственные значения оператора $\mathcal L_{P,U}$ асимптотически сближаются с собственными значениями невозмущенного оператора.
\begin{theorem}\label{tm:0.8}
Пусть потенциал $P$ имеет вид \eqref{P} и $\mathcal L_{P,U}$ --- регулярный оператор Дирака. Обозначим  через $\{\lambda_n^0\}$ собственные значения оператора
$\mathcal L_{0,U}$  и через $\lambda_n$ собственные значения оператора $\mathcal L_{P,U}$ с учетом алгебраической кратности. Тогда при подходящей нумерации
последовательности $\{\lambda_n\}_{n\in\mathbb Z}$ (и такая нумерация возможна)
$$
\lambda_n=\lambda_n^0+o(1)\quad \text{при}\ \  |n|\to\infty.
$$
В частности, $\{\lambda_n\}_{n\in\mathbb Z}\subset \Pi_{\alpha_0}$ для некоторого $\alpha_0>0$.
\end{theorem}
\begin{proof}
Обозначим $f(\lambda):=\Delta(\lambda)-\Delta_0(\lambda)$. В силу утверждения \ref{tm:0.7}, найдется $\alpha_0$ такое, что
$$
\frac{|f(\lambda)|}{|\Delta_0(\lambda)|}\to0\quad \text{при }\ \ \lambda\to\infty,\ \lambda\notin\Pi_{\alpha_0}.
$$
Выберем число $\alpha>\alpha_0$ так, чтобы на прямых $|\mathrm{Im\,}\lambda|=\alpha$ было выполнено неравенство $|f(\lambda)|<|\Delta_0(\lambda)|$. Далее, зафиксируем
произвольное число $\mu\in(0,2)$, для которого на прямой $\mathrm{Re\,}\lambda=\mu$ нет нулей функции $\Delta_0(\lambda)$ и обозначим
$$
m=\min\{|\Delta_0(\lambda)|: \mathrm{Re\,}\lambda=\mu\}.
$$
Вновь обращаясь к утверждению \ref{tm:0.7}, видим, что $|f(\lambda)|\to0$ при $\lambda\to\infty$ внутри полосы $\Pi_\alpha$. Тогда найдется такое натуральное $N_1$,
что при всех $\lambda\in\overline{\Pi}_{\alpha}$, $|\mathrm{Re\,}\lambda|\geqslant\mu+2N_1$, выполнено $|f(\lambda)|<m$. Заметим, что функция $\Delta_0(\lambda)$
периодична с периодом $2$, а значит на вертикальных отрезках $\mathrm{Re\,}\lambda=\mu\pm2n$, $n>N_1$, внутри полосы $\Pi_\alpha$ выполнено
$\min|\Delta_0(\lambda)|=m>|f(\lambda)|$. Применим теорему Руше к прямоугольнику, ограниченному прямыми $\mathrm{Im\,}\lambda=\pm\alpha$,
$\mathrm{Re\,}\lambda=\mu\pm2n$, где $n>N_1$ и получим, что функции $\Delta(\lambda)$ и $\Delta_0(\lambda)$ имеют одинаковое (с учетом кратности) число нулей в любом
таком прямоугольнике.

Перейдем к изучению нулей функции $\Delta(\lambda)$ при $\lambda\to\infty$ внутри полосы $\Pi_\alpha$. Зафиксируем число $r$ так, чтобы круги
$U_r(\lambda_n^0)=\{\lambda:|\lambda-\lambda_n^0|\leqslant r\}$, $n\in\mathbb Z$, не пересекались и лежали в полосе $\Pi_\alpha$. Обозначим
$$
m_n=\min\{|\Delta_0(\lambda)|: |\lambda-\lambda_n^0|=r\}.
$$
Поскольку функция $\Delta_0(\lambda)$ периодична, то $m_n\geqslant M$ для некоторого $M>0$. Тогда существует такое натуральное $N_2$, что при
$|\mathrm{Re\,}\lambda|>\mu+2N_2$ на окружностях $|\lambda-\lambda_n^0|=r$ выполнено: $|f(\lambda)|<|\Delta_0(\lambda)|$. По теореме Руше количество нулей функций
$\Delta(\lambda)$ и $\Delta_0(\lambda)$ в каждом круге $U_r(\lambda_n^0)$, $|n|\geqslant 2N_2+2$, совпадает. Теперь мы занумеруем нули функции $\Delta(\lambda)$ в
каждом таком круге так, чтобы их номера совпадали с номерами нулей функции $\Delta_0(\lambda)$ в этом же круге. Из рассуждений, приведенных выше, следует, что
количество нулей функций $\Delta(\lambda)$ и $\Delta_0(\lambda)$, не попавших в объединение этих кругов, конечно и одинаково. Проведем нумерацию оставшихся нулей
функции $\Delta(\lambda)$ в произвольном порядке. Нули функции $\Delta(\lambda)$
--- собственные значения оператора $\mathcal L_{P,U}$ --- мы обозначим
$\{\lambda_n\}_{n\in\mathbb Z}$. Остается заметить, что число $r$ мы можем уменьшать и выбирать сколь угодно малым. Для любого такого $r$ найдется номер $N(r)$, что
при всех $|n|>N(r)$ выполнено $|\lambda_n-\lambda_n^0|<r$. Иными словами, $\lambda_n=\lambda_n^0+o(1)$.
\end{proof}

Теперь докажем теорему об асимптотике собственных функций  сильно регулярного оператора. В случае регулярного, но не сильно регулярного оператора,
собственные значения асимптотически двукратны. В этом случае мы изучим асимптотическое поведение соответствующих двумерных спектральных проекторов
(см. теорему \ref{tm:Sppr} ниже).
\begin{theorem}\label{tm:0.9}
Пусть потенциал $P(x)$ имеет вид \eqref{P}, а  оператор $\mathcal L_{P,U}$ сильно регулярен. Обозначим через $\{\mathbf y_n(x)\}$ нормированные собственные функции
этого оператора, отвечающие собственным значениям $\{\lambda_n\}$,  а через $\{\mathbf y_n^0(x)\}$ ---  нормированные собственные функции оператора $\mathcal
L_{0,U}$, отвечающие собственным значениям $\{\lambda^0_n\}$. Тогда
\begin{equation}\label{efas1}
%\begin{array}{cc}
\mathbf y_n(x)=\mathbf y_n^0(x)+\mathbf{r}_n(x),\qquad \text{где} \ \|\mathbf{r}_n\|_C\to0.
%\end{array}
\end{equation}
Более того, справедливо представление
\begin{equation}\label{efas2}
y_{1, n}(x)=e^{i\lambda_nx}\tau_{1, n}(x), \qquad y_{2, n}(x)=e^{-i\lambda_nx}\tau_{2, n}(x),
\end{equation}
причем $|\tau_{j, n}(0)|\leqslant C$, $j=1, 2$, а производные функций $\tau_{j, n}(x)$ подчинены оценке
\begin{equation}
\label{tau} |\tau'_{j, n}(x)|\leqslant C (|p_2(x)|+|p_3(x)|),
\end{equation}
почти всюду на $[0,\pi]\ni x$, где постоянная $C$ не зависит ни от $n$, ни от $x$.
\end{theorem}
\begin{proof}
Поскольку оператор $\mathcal L_{0,U}$ сильно регулярен, то все его собственные значения просты. Обозначим $\delta=\min_{n\neq m}|\lambda_n^0-\lambda_m^0|/2$. Тогда, в
силу теоремы \ref{tm:0.8}, существует номер $N$, такой, что для всех $|n|>N$ в $\delta-$окрестности точки $\lambda_n^0$ лежит ровно одно собственное значение
$\lambda_n$ оператора $\mathcal L_{P,U}$. Из определения собственных значений следует, что $\Delta_0(\lambda_n^0)=0$, где $\Delta_0(\lambda)=\det M_0(\lambda)$,
$$
M_0(\lambda)=\begin{pmatrix}M^0_{11}(\lambda)&M^0_{12}(\lambda)\\M^0_{21}(\lambda)&M^0_{22}(\lambda)\end{pmatrix}=\begin{pmatrix}u_{11}&u_{12}\\
u_{21}&u_{22}\end{pmatrix}+\begin{pmatrix}u_{13}&u_{14}\\ u_{23}&u_{24}\end{pmatrix}
\begin{pmatrix}e^0_{11}(\pi,\lambda)&e^0_{12}(\pi,\lambda)\\ e^0_{21}(\pi,\lambda)&e^0_{22}(\pi,\lambda)\end{pmatrix},
$$
Обозначим $\omega_n^0=(M^0_{12}(\lambda_n^0),\,-M^0_{11}(\lambda_n^0))^t$  --- тогда функция
$$
\widetilde{\mathbf y}_n^0(x)=\omega_{1, n}^0\mathbf e_1^0(x,\lambda_n^0)+\omega_{2, n}^0\mathbf e_2^0(x,\lambda_n^0)
$$
является собственной (ненормированной) функцией для оператора $\mathcal L_{0,U}$. Для $|n|>N$ аналогично определим вектор
$\omega_n=(M_{12}(\lambda_n),\,-M_{11}(\lambda_n))^t$, так что функция
$$
\widetilde{\mathbf y}_n(x)=\omega_{1, n}\mathbf e_1(x,\lambda_n)+\omega_{2, n}\mathbf e_2(x,\lambda_n)
$$
является собственной для оператора $\mathcal L_{P,U}$. Из \eqref{stripas}
следует, что
$$
\|\mathbf e_1(x,\lambda_n) - \mathbf e_1^0(x,\lambda_n)\|_C+\|\mathbf e_2(x,\lambda_n) - \mathbf e_2^0(x,\lambda_n)\|_C\to0\qquad\text{при }n\to\infty,
$$
а из теоремы \ref{tm:0.8} и явного вида функций $\mathbf e_1^0(x,\lambda)$ и $\mathbf e_2^0(x,\lambda)$
$$
\|\mathbf e_1^0(x,\lambda_n) - \mathbf e_1^0(x,\lambda^0_n)\|_C+\|\mathbf e_2^0(x,\lambda_n) - \mathbf e_2^0(x,\lambda^0_n)\|_C\to0.
$$
Тогда  $\|\omega_n-\omega_n^0\|\longrightarrow0$, а значит
\begin{multline*}
\widetilde{\mathbf y}_n(x)=\omega_{1, n}^0\mathbf e_1(x,\lambda_n)+\omega_{2, n}^0\mathbf e_2(x,\lambda_n)+(\omega_{1, n}-\omega_{1, n}^0)\mathbf
e_1(x,\lambda_n)+(w_{2, n}-\omega_{2,
n}^0)\mathbf e_2(x,\lambda_n)=\\
=\omega_{1, n}^0\mathbf e_1^0(x,\lambda_n)+\omega_{2, n}^0\mathbf e_2^0(x,\lambda_n)+\mathbf{r}_n(x),\qquad\text{где}\quad\|\mathbf{r}_n(x)\|_C\to0.
\end{multline*}
Остается нормировать функции $\widetilde{\mathbf y}_n$ и $\widetilde{\mathbf y}_n^0$. Заметим, что $\big|\|\widetilde{\mathbf y}_n\|_{\mathbb H}-\|\widetilde{\mathbf
y}_n^0\|_{\mathbb H}\big|\leqslant C\|\bold r_n\|_{C}=o(1)$. Далее, функции $\widetilde{\mathbf y}_n^0$ зависят только от четности номера $n$, а значит
последовательность норм $\{\|\widetilde{\mathbf y}_n^0\|\}_{n\in\mathbb N}$ (и в пространстве $C$, и в пространстве $\mathbb H$) отделена от нуля и от бесконечности.
Тогда тем же свойством обладает и последовательность $\{\|\widetilde{\mathbf y}_n\|\}_{\mathbb N}$, откуда
$$
\left\|\frac{\widetilde{\mathbf y}_n(x)}{\|\widetilde{\mathbf y}_n\|_{\mathbb H}}-\frac{\widetilde{\mathbf y}_n^0(x)}{\|\widetilde{\mathbf y}_n^0\|_{\mathbb
H}}\right\|_{C}\leqslant \frac{\|\widetilde{\mathbf y}_n^0\|_{\mathbb H}\cdot\|\widetilde{\mathbf y}_n-\widetilde{\mathbf y}_n^0\|_C+\|\widetilde{\mathbf
y}_n^0\|_C\cdot\big|\|\widetilde{\mathbf y}_n^0\|_{\mathbb H}-\|\widetilde{\mathbf y}_n\|_{\mathbb H}\big|}{\|\widetilde{\mathbf y}_n\|_{\mathbb
H}\cdot\|\widetilde{\mathbf y}_n^0\|_{\mathbb H}}=o(1)
$$
и представление \eqref{efas1} доказано. Для доказательства представления \eqref{efas2} воспользуемся соотношениями \eqref{eq:rhodef} и \eqref{stripas}. Получим
$$
\begin{cases}
\widetilde{y}_{1,n}(x)=e^{i\lambda_nx}\left(\omega_{1,n}+\omega_{1,n}\eta_{11}(x,\lambda_n)+\omega_{2,n}\eta_{12}(x,\lambda_n)\right),\\
\widetilde{y}_{2,n}(x)=e^{-i\lambda_nx}\left(\omega_{2,n}+\omega_{1,n}\eta_{21}(x,\lambda_n)+\omega_{2,n}\eta_{22}(x,\lambda_n)\right),
\end{cases}
\qquad |n|>N.
$$
Введем обозначения
\begin{equation*}
\widetilde{\tau}_{1,n}(x)=\widetilde{y}_{1,n}(x)e^{-i\lambda_nx}\quad \text{и} \quad\widetilde{\tau}_{2,n}(x)=\widetilde{y}_{2,n}(x)e^{i\lambda_nx}.
\end{equation*}
Тогда $\widetilde{\tau}_{j,n}(0)=\omega_{j,n}=\omega_{j,n}^0+o(1)$ при $|n|\to\infty$, $j=1,\,2$. Поскольку числа $\omega^0_{1,n}$ и $\omega^0_{2,n}$ зависят только
от четности номера $n$, то последовательности $\{\widetilde{\tau}_{1,n}(0)\}_{|n|>N}$ и $\{\widetilde{\tau}_{2,n}(0)\}_{|n|>N}$ ограничены. Остается оценить
производные:
$$
\widetilde{\tau}'_{j,n}(x)=\omega_{1,n}\eta'_{j1}(x,\lambda_n)+\omega_{2,n}\eta'_{j2}(x,\lambda_n),
$$
а значит, согласно \eqref{stripas2},
\begin{equation*}
|\widetilde{\tau}'_{1,n}(x)|\leqslant M|p_2(x)|(|\omega_{1,n}|+|\omega_{2,n}|), \qquad |\widetilde{\tau}'_{2,n}(x)|\leqslant M|p_3(x)|(|\omega_{1,n}|+|\omega_{2,n}|).
\end{equation*}
Отсюда сразу следует оценка \eqref{tau} для ненормированных функций $\widetilde{\mathbf y}_n$. Так как нормы $\{\|\widetilde{\mathbf y}_n\|_{\mathbb H}\}_{|n|>N}$
отделены от нуля, то эта оценка сохранится и после нормировки.
\end{proof}

\section{Функция Грина.}
Мы покажем, что резольвента оператора $\mathcal L_{P,U}$ компактна и изучим асимптотическое поведение производящей функции $G(t,x,\lambda)$ этого компактного
оператора (функции Грина) при $\lambda\to\infty$.
\begin{proposition}\label{tm:Green} Пусть потенциал $P$ имеет вид \eqref{P}.
Резольвента $\mathfrak R(\lambda)=(\mathcal L_{P, U}-\lambda I)^{-1}$ регулярного оператора $\mathcal L_{P,U}$ определена при всех $\lambda\in\mathbb C\setminus
\{\lambda_n\}_{n\in\mathbb Z}$, где $\lambda_n$
--- собственные значения оператора $\mathcal L_{P,U}$, и является интегральным оператором в $\mathbb H$
\begin{equation}\label{Green}
\mathfrak R(\lambda)\mathbf f=\int_0^\pi G(t,x,\lambda)\mathbf f(t)dt.
\end{equation}
Функция $G(t,x,\lambda)$ непрерывна на квадрате $(t,x)\in[0,\pi]^2$ за исключением диагонали $x=t$.
\end{proposition}
\begin{proof}
Матрица $E(x,\lambda)$, определенная в \eqref{matrE}, удовлетворяет уравнению
$$
BE'(x,\lambda)+P(x)E(x,\lambda)=\lambda E(x,\lambda),
$$
причем $E(0,\lambda)=I$. Применим метод  вариации постоянных к уравнению $\ell_P(\mathbf y)=\lambda\mathbf y+\mathbf f$. Тогда решение этого уравнения примет вид
\begin{equation}\label{Varconst1}
\mathbf y(x,\lambda)=\omega_1\mathbf e_1(x,\lambda)+\omega_2\mathbf e_2(x,\lambda)-\int_x^\pi E(x, \lambda)E^{-1}(t, \lambda)B^{-1}\mathbf f(t)\,dt,
\end{equation}
где $\omega_1$ и $\omega_2$ --- произвольные числа. Легко видеть, что
$$\mathbf y(0,\lambda)=\omega-\int_0^\pi E^{-1}(t, \lambda)B^{-1}\mathbf f(t)\,dt, \quad \mathbf y(\pi,\lambda)=E(\pi,\lambda)\omega,\quad\text{где }\omega=(\omega_1,\omega_2)^t.
$$
Для определения вектора $\omega$ воспользуемся краевыми условиями $ C\mathbf y(0)+D\mathbf y(\pi)=0$, введенными в \eqref{matrU}. Тогда
$$
\omega=\int_0^\pi M^{-1}(\lambda)CE^{-1}(t,\lambda)B^{-1}\mathbf f(t)\,dt,
$$
где $M(\lambda)=C+DE(\pi,\lambda)$. Матрица $M^{-1}(\lambda)$ определена в точности тогда, когда $\Delta(\lambda)=\det M(\lambda)\ne0$, т.е. для всех
$\lambda\in\mathbb C\setminus\{\lambda_n\}_{n\in\mathbb Z}$. Функция $\mathbf y(x,\lambda)$ теперь принимает вид
$$
\mathbf y(x,\lambda)=\int_0^\pi G(t,x,\lambda)\mathbf f(t)\,dt,
$$
где
\begin{equation}\label{Greenf}
G(t,x,\lambda)=\begin{cases}E(x,\lambda)M^{-1}(\lambda)CE^{-1}(t,\lambda)B^{-1},\
\text{при}\ t<x,\\
E(x,\lambda)(M^{-1}(\lambda)C-I)E^{-1}(t,\lambda)B^{-1},\ \text{при}\ t>x,\end{cases}
\end{equation}
что доказывает требуемое утверждение.
\end{proof}
\begin{definition} Функция $G(t,x,\lambda)$ называется {\it функцией Грина}
оператора $\mathcal L_{P, U}$. Через $G_0(t, x, \lambda)$ будем обозначать функцию Грина невозмущенного оператора $\mathcal L_{0, U}$.
\end{definition}
Отметим, что из доказанного утверждения следует компактность в пространстве $\mathbb H$ оператора $\mathfrak R(\lambda)$ при любом
$\lambda\notin\{\lambda_n\}_{n\in\mathbb Z}$ (отсюда, в частности, следует замкнутость оператора $\mathcal L_{P,U}$).

Наша ближайшая цель --- получить оценки для функции $G(t,x,\lambda)$. Эти оценки являются ключевыми для доказательства полноты системы собственных и присоединенных
функций оператора $\mathcal L_{P,U}$. Для упрощения дальнейших выкладок обозначим матрицу $E(x,\lambda)E^{-1}(a,\lambda)$ через $\mathcal E(a,x,\lambda)$, где
$0\leqslant a,\,x\leqslant\pi$, $\lambda\in\mathbb C$, и найдем ее в явном виде. Мы уже отмечали в доказательстве утверждения \ref{tm:0.7}, что $\det
E(x,\lambda)\equiv1$. Тогда
$$
E^{-1}(a,\lambda)=\begin{pmatrix}e_{22}(a,\lambda)&-e_{12}(a,\lambda)\\ -e_{21}(a,\lambda)&e_{11}(a,\lambda)\end{pmatrix},
$$
откуда
$$
\mathcal E(a,x,\lambda)=\begin{pmatrix}\mathcal E_{11}(a,x,\lambda)&\mathcal E_{12}(a,x,\lambda)\\\mathcal E_{21}(a,x,\lambda)&\mathcal
E_{22}(a,x,\lambda)\end{pmatrix},
$$
где
\begin{gather}
\mathcal
E_{j1}(a,x,\lambda)=e_{j1}(x,\lambda)e_{22}(a,\lambda)-e_{j2}(x,\lambda)e_{21}(a,\lambda),\label{calE}\\
\mathcal E_{j2}(a,x,\lambda)=e_{j2}(x,\lambda)e_{11}(a,\lambda)-e_{j1}(x,\lambda)e_{12}(a,\lambda),\quad j=1,\,2.\notag
\end{gather}
В случае $P(x)\equiv0$ будем использовать обозначения $\mathcal E^0(a,x,\lambda)=(\mathcal E^0_{jk}(a,x,\lambda))$, $j,\,k=1,\,2$.
\begin{lemma} \label{lm:0.4}
Матрица $\mathcal{E}(a,x,\lambda)$ удовлетворяет уравнению
\begin{equation}\label{maineq}
B\mathcal{E}'(x)+P(x)\mathcal{E}(x)=\lambda\mathcal{E}(x),\quad x\in[0,\pi]
\end{equation}
и начальному условию $\mathcal{E}(a,a,\lambda)=I$. Функции $\mathcal{E}_{ij}(a,x,\lambda)$ аналитичны по $\lambda$ во всей комплексной плоскости  и при
$\lambda\to\infty$ верно представление
\begin{equation}\label{Eb}
\mathcal E(a,x,\lambda)= \begin{pmatrix}e^{i\lambda\xi}\cdot(1+o(1))+e^{-i\lambda\xi}\cdot o(1)&\quad e^{i\lambda\xi}\cdot o(1)+e^{-i\lambda\xi}\cdot o(1)\\
e^{i\lambda\xi}\cdot o(1)+e^{-i\lambda\xi}\cdot o(1)&\quad e^{i\lambda\xi}\cdot o(1)+e^{-i\lambda\xi}\cdot (1+o(1))
\end{pmatrix},\ \xi=x-a,
\end{equation}
при $\mathbb C\ni\lambda\to\infty$ равномерно по $0\leqslant a,\ x\leqslant\pi$.
\end{lemma}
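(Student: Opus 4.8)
The plan is to exploit the factorisation $\mathcal E(a,x,\lambda)=E(x,\lambda)E^{-1}(a,\lambda)$ set up just above, together with $\det E(x,\lambda)\equiv1$, and to transfer the asymptotics of $E$ (Theorems \ref{B}, \ref{C} and \ref{tm:0.6}) onto $\mathcal E$. Since $E^{-1}(a,\lambda)$ does not depend on $x$, differentiating $\mathcal E(a,x,\lambda)=E(x,\lambda)E^{-1}(a,\lambda)$ in $x$ and using $BE'(x,\lambda)+P(x)E(x,\lambda)=\lambda E(x,\lambda)$ gives $B\mathcal E'(x)+P(x)\mathcal E(x)=\lambda\mathcal E(x)$, while $\mathcal E(a,a,\lambda)=E(a,\lambda)E^{-1}(a,\lambda)=I$; by the uniqueness clause of Theorem \ref{A}, $\mathcal E(a,\cdot,\lambda)$ is \emph{the} $AC$-solution of this Cauchy problem, and for $P\equiv0$ it equals $\mathcal E^0(a,x,\lambda)=\mathrm{diag}(e^{i\lambda\xi},e^{-i\lambda\xi})$, $\xi=x-a$, which is the leading term in \eqref{Eb}. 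The entries of $E$ are entire in $\lambda$ (Theorems \ref{C}, \ref{tm:0.6}); as observed in the proof of Proposition \ref{tm:0.7}, $\det E(x,\lambda)\equiv1$ because $\mathrm{tr}\,B^{-1}(\lambda I-P(x))=0$ for $P$ of the form \eqref{P}; hence $E^{-1}(a,\lambda)$ has entire entries as well, so the products \eqref{calE}, i.e. $\mathcal E_{ij}(a,x,\lambda)$, are entire in $\lambda$.

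For $\lambda$ in the strip $\Pi_\alpha$ the factors $e^{\pm i\lambda x}$, $x\in[0,\pi]$, are bounded above and below uniformly. Inserting $e_{11}=e^{i\lambda x}+\rho_{11}$, $e_{22}=e^{-i\lambda x}+\rho_{22}$, $e_{12}=\rho_{12}$, $e_{21}=\rho_{21}$ from \eqref{eq:rhodef} into \eqref{calE}, the leading term of $\mathcal E_{11}$ is $e^{i\lambda x}e^{-i\lambda a}=e^{i\lambda\xi}$, that of $\mathcal E_{22}$ is $e^{-i\lambda\xi}$, while $\mathcal E_{12},\mathcal E_{21}$ have none, and every remaining summand is a bounded exponential times one or two factors $\rho_{jk}$, each of which tends to $0$ uniformly in its first variable by Theorem \ref{B}. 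Hence $\mathcal E_{11}=e^{i\lambda\xi}+o(1)$, $\mathcal E_{22}=e^{-i\lambda\xi}+o(1)$, $\mathcal E_{12}=o(1)$, $\mathcal E_{21}=o(1)$ as $\Pi_\alpha\ni\lambda\to\infty$, uniformly in $a,x\in[0,\pi]$; since $e^{\pm i\lambda\xi}$ is bounded away from $0$ and $\infty$ in $\Pi_\alpha$, this is exactly \eqref{Eb} there.

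The main difficulty is $\lambda\to\infty$ outside $\Pi_\alpha$: there $E^{-1}(a,\lambda)$ has entries of exponential growth and a crude bound on the product \eqref{calE} is useless. In the sectors $S_1,S_2$ I would use the sharp two-term form instead. By Theorem \ref{C}, $Y(x,\lambda)$ has the shape \eqref{eq:Malamud}, and since $E(x,\lambda)=Y^{-1}(0,\lambda)Y(x,\lambda)$ with $Y(0,\lambda)=I+o(1)$, also $e_{11}=e^{i\lambda x}(1+o(1))$, $e_{22}=e^{-i\lambda x}(1+o(1))$, $e_{12}=e^{-i\lambda x}o(1)$, $e_{21}=e^{i\lambda x}o(1)$, uniformly in $x$. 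Substituting into \eqref{calE}, the factors $e^{\pm i\lambda a}$ coming from $E^{-1}(a,\lambda)$ combine with the $e^{\pm i\lambda x}$ of $E$ precisely into $e^{\pm i\lambda\xi}$; for instance $\mathcal E_{11}=e^{i\lambda x}(1+o(1))\,e^{-i\lambda a}(1+o(1))-e^{-i\lambda x}o(1)\,e^{i\lambda a}o(1)=e^{i\lambda\xi}(1+o(1))+e^{-i\lambda\xi}o(1)$, and the other three entries are handled identically. This establishes \eqref{Eb} on $S_1\cup S_2$, uniformly in $a,x$.

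The sectors $S_1,S_2$ together with $\Pi_\alpha$ (which contains the rays $\arg\lambda=0,\pi$) leave only the four sectors $S_3,\dots,S_6$ of half-opening $\pi/6$ about the real semiaxes, and there I would repeat the Phragmén–Lindelöf argument from the proof of Theorem \ref{tm:0.6}: for fixed $j,k$ and $x$ the function $\mathcal E_{jk}(a,x,z)-\mathcal E^0_{jk}(a,x,z)$ is entire of exponential type (being a polynomial in the $e_{jk}$), is bounded on the real axis by the strip estimate and bounded on all of $S_1$ (resp. $S_2$) by the previous step, and is $o(1)$ along the real semiaxes; multiplying it by a suitable bounded exponential $e^{\mp iz\xi}$ and invoking Lemma \ref{FrLin} yields $o(1)$ throughout $S_3,\dots,S_6$. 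Assembling all regions gives \eqref{Eb} for $\mathbb C\ni\lambda\to\infty$. Uniformity in $a$ is automatic, since every estimate used involves $P$ only through $\int|p_2|$ and $\int|p_3|$ over subintervals of $[0,\pi]$, hence is controlled by $\|p_2\|_1+\|p_3\|_1$.
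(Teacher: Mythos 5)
Your proof is correct in all essentials, but it is genuinely more detailed than the paper's, which disposes of \eqref{Eb} in a single sentence: ``the asymptotic representation \eqref{Eb} follows from \eqref{mainasc}''. Your extra care is warranted: substituting the two-exponential form \eqref{mainasc} for each factor of \eqref{calE} literally produces parasitic terms $e^{\pm i\lambda(x+a)}\cdot o(1)$, and outside a horizontal strip $|e^{i\lambda(x+a)}|=e^{-\mathrm{Im\,}\lambda\,(x+a)}$ is not dominated by $\max\bigl(|e^{i\lambda\xi}|,|e^{-i\lambda\xi}|\bigr)=e^{|\mathrm{Im\,}\lambda|\,|x-a|}$, so the paper's one-line deduction is not a proof as written. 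What you do instead --- rerunning the three-region scheme of Theorem \ref{tm:0.6} with base point $a$ in place of $0$: Theorem \ref{B} in $\Pi_\alpha$, the single-exponential-per-entry form extracted from Theorem \ref{C} via $Y^{-1}(0,\lambda)=I+o(1)$ in $S_1\cup S_2$ (where the cross terms carry an $o(1)\cdot o(1)$ coefficient and the exponentials recombine into $e^{\pm i\lambda\xi}$), and Lemma \ref{FrLin} in the four residual sectors --- is exactly the argument the authors implicitly rely on, and it also delivers the uniformity in $a$. The one step you should tighten is the Phragm\'en--Lindel\"of one: Lemma \ref{FrLin} requires $\sup|f|$ over the \emph{whole} closed quarter-plane, and the union of the strip with $S_1$ does not cover the thin sectors $\{0<\arg z\leqslant\varepsilon,\ \mathrm{Im\,}z\geqslant\alpha\}$; there you must invoke the a priori (Gronwall) bound $|\mathcal E_{jk}(a,x,z)|\leqslant Ce^{|\mathrm{Im\,}z|\,|x-a|}$ for the fundamental matrix normalized at $a$, which is the same tacit supplement needed in the paper's own proof of Theorem \ref{tm:0.6}. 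With that observation added, your argument is complete.
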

\begin{proof}
То, что матрица $\mathcal E(a,x,\lambda)$ удовлетворяет уравнению \eqref{maineq} сразу следует из \eqref{calE}. Равенство $\mathcal E(a,a,\lambda)=I$ очевидно.
Асимптотическое представление \eqref{Eb} следует из \eqref{mainasc}.
\end{proof}
Для дальнейшего нам необходимы сведения об операторе $(\mathcal L_{P, U})^*$.
\begin{proposition}\label{lm:0.1}
Сопряженным к регулярному оператору $\mathcal L_{P, U}$ является оператор, который задается сопряженным дифференциальным выражением
$$
\ell_{P^*}(\mathbf y)=B\mathbf y'+P^*(x)\mathbf y,\qquad \text{где} \ P^*(x)=\begin{pmatrix}0&\overline{p_3}(x)\\\overline{p_2}(x)&0\end{pmatrix},
$$
и сопряженными краевыми условиями. Сопряженные краевые условия
выписываются неоднозначно, в частности, матрицу краевых условий
можно взять равной
$$
\mathcal{U}^*=\begin{pmatrix}\overline{J_{23}}&\overline{J_{13}}&-\overline{J_{12}}&0\\
0&-\overline{J_{34}}&\overline{J_{24}}&\overline{J_{23}}\end{pmatrix}.
$$
Для любого регулярного (сильно регулярного) оператора $\mathcal L_{P,U}$ сопряженный оператор $\left(\mathcal L_{P,U}\right)^*=\mathcal L_{P^*, U^*}$
также является регулярным (сильно регулярным). Собственные значения оператора $\mathcal L_{P^*,U^*}$ совпадают (с учетом кратности) с числами
$\overline{\lambda}_n$, где $\lambda_n$, $n\in\mathbb Z$,
--- собственные значения оператора $\mathcal L_{P,U}$.

Для всякого $\lambda\notin\{\overline{\lambda}_n\}_{n\in\mathbb Z}$ определена резольвента $\mathfrak R^*(\lambda)=(\mathcal L_{P^*,U^*}-\lambda I)^{-1}$, которая
имеет вид
$$
\mathfrak R^*(\lambda)\mathbf f=\int_0^\pi G^*(t,x,\lambda)\mathbf f(t)dt.
$$
Матрица $G^*(t,x,\lambda)=(g^*_{jk}(t,x,\lambda))$ связана с функцией $G(t,x,\lambda)=(g_{jk}(t,x,\lambda))$, введенной в \eqref{Green}, соотношениями
\begin{equation}\label{conjGreen}
g^*_{jk}(t,x,\lambda)=\overline{g_{kj}}(x,t,\overline{\lambda}),\qquad j,\,k=1,\,2,\quad x,\,t\in[0,\pi],\ \lambda\in\mathbb
C\setminus\{\overline{\lambda}_n\}_{n\in\mathbb Z}.
\end{equation}
\end{proposition}
\begin{proof}
Вид сопряженного дифференциального выражения следует из леммы \ref{lem:0.1}. Вид сопряженных краевых условий и их регулярность проверяется
непосредственными вычислениями с использованием тождества \eqref{eq:lagr} и определения \ref{def:reg}. Соотношения \eqref{conjGreen} общеизвестны.
\end{proof}
Для сокращения записи далее будем обозначать
\begin{gather*}
U_\delta(\lambda_n)=\{z\in\mathbb C: |z-\lambda_n|<\delta\},\qquad \Omega_{\delta}=\mathbb C\setminus\bigcup\limits_{n\in\mathbb Z}U_\delta(\lambda_n),\\
\Omega_{\alpha,\delta}=\Pi_\alpha\cap\Omega_{\delta}, \qquad\Omega_{\alpha,\delta,R}=\{z\in\Omega_{\alpha,\delta}: |\mathrm{Re\,} z|>R\}.
\end{gather*}
\begin{lemma}\label{Deltaest}
Для любого $\delta>0$ существует такое число $M=M(P,U,\delta)$, что при всех $\lambda\in\overline{\Omega_\delta}$, характеристический определитель регулярного
оператора $\mathcal L_{P,U}$ удовлетворяет оценке $\left|\Delta(\lambda)\right|\geqslant Me^{\pi|\mathrm{Im\,}\lambda|}$.
\end{lemma}
\begin{proof}
Согласно теореме \ref{tm:0.8} и утверждению \ref{tm:0.7}, найдется такое число $\alpha_0>0$, что все круги $U_\delta(\lambda_n)$ лежат в полосе $\Pi_{\alpha_0}$ и при
$\lambda\to\infty$ вне $\Pi_{\alpha_0}$ справедливо равенство $\Delta(\lambda)/\Delta_0(\lambda)=1+o(1)$. Увеличивая, если нужно, число $\alpha_0$, можно считать, что
$|\Delta(\lambda)|\geqslant |\Delta_0(\lambda)|/2$ для всех $\lambda\notin\Pi_{\alpha_0}$. Тогда из неравенства \eqref{Delta0est} следует доказываемое неравенство при
$\mathrm{Im\,}\lambda>\alpha_0$. Случай $\mathrm{Im\,}\lambda<-\alpha_0$ аналогичен. Для завершения доказательства остается показать, что для всех точек
$\lambda\in\overline{\Omega_{\alpha_0,\delta}}$ справедлива оценка $|\Delta(\lambda)|\geqslant M$ при некотором $M>0$. Согласно теореме \ref{tm:0.8}, найдется такое
число $R$, что для всех собственных значений $\lambda_n$, $|\lambda_n|>R$, справедливы неравенства $|\lambda_n-\lambda_n^0|<\delta/2$. В силу периодичности функции
$\Delta_0(\lambda)$, существует такое $m>0$, что $|\Delta_0(\lambda)|\geqslant m$ в $\Pi_{\alpha_0}$ вне кругов $U_{\delta/2}(\lambda_n^0)$. Поскольку
$\Delta(\lambda)=\Delta_0(\lambda)+o(1)$ при $\lambda\to\infty$ в полосе $\lambda\in\Pi_{\alpha_0}$ (см. утверждение \ref{tm:0.7}), то, увеличивая, если необходимо,
число $R$, можно считать, что при $|\mathrm{Re\,}\lambda|>R$ выполнена оценка $|\Delta(\lambda)|\geqslant m/2$ в $\Pi_{\alpha_0}$ вне кругов
$U_{\delta/2}(\lambda_n^0)$. Так как при $|\mathrm{Re\,}\lambda|>R$ круг $U_{\delta/2}(\lambda_n^0)$ содержится в круге $U_\delta(\lambda_n)$, то оценка
$|\Delta(\lambda)|\geqslant m/2$ выполнена при всех $\lambda\in\overline{\Omega}_{\alpha_0,\delta,R}$. Наконец, на компакте
$$
\{\lambda:|\mathrm{Im\,}\lambda|\leqslant\alpha_0,\,|\mathrm{Re\,}\lambda|\leqslant R,\,|\lambda-\lambda_n|\geqslant\delta,\,n\in\mathbb Z\}
$$
функция $\Delta(\lambda)$ не обращается в ноль, а значит, отделена от нуля.
\end{proof}
\begin{theorem}\label{tm:Greenest}
Пусть $\mathcal L_{P,U}$ --- произвольный оператор Дирака с потенциалом вида \eqref{P} и регулярными краевыми условиями $U$. Для любого $\delta>0$ существует такое
число $M=M(P,U,\delta)$, что в $\overline{\Omega_{\delta}}$ функция $G(t,x,\lambda)=(g_{jk}(t,x,\lambda))$ оператора $\mathcal L_{P,U}$ удовлетворяет оценке
$$
|g_{jk}(t,x,\lambda)|\leqslant M.
$$
Кроме того, для любых положительных чисел $\alpha$, $\delta$ и $\varepsilon$ найдется такое $R>0$, что при всех $\lambda\in\overline{\Omega_{\alpha,\delta,R}}$ и всех
$t,x\in [0,\pi]$ выполнено
$$
|g_{jk}(t, x, \lambda)-g_{jk}^0(t, x, \lambda)|<\varepsilon,
$$
где $G_0(t,x,\lambda)=(g_{jk}^0(t,x,\lambda))$ --- функция Грина оператора $\mathcal L_{0,U}$.
\end{theorem}
\begin{proof}
Матрицы $E(x,\lambda)$, $M(\lambda)$, $C$ и $E^{-1}(t,\lambda)$ явно выписаны в \eqref{matrE}, \eqref{matrM}, \eqref{matrU} и \eqref{calE} соответственно. Тогда из
\eqref{Greenf} непосредственными вычислениями получаем, что
\begin{multline}\label{Greensuper}
G(t, x, \lambda)=i\left(\frac{J_{12}}{\Delta(\lambda)}-\chi_{t>x}(t,x)\right)\begin{pmatrix} \mathcal E_{11}(t, x, \lambda)&-\mathcal E_{12}(t, x, \lambda) \\
\mathcal
E_{21}(t, x, \lambda)&-\mathcal E_{22}(t, x, \lambda)\end{pmatrix}+\\
+\frac{i}{\Delta(\lambda)}\begin{pmatrix} \mathcal E_{11}(\pi, x, \lambda)&-\mathcal E_{12}(\pi, x, \lambda) \\ \mathcal E_{21}(\pi, x, \lambda)&-\mathcal E_{22}(\pi,
x,
\lambda)\end{pmatrix}\cdot\begin{pmatrix}J_{14}&J_{24}\\
J_{13}&J_{23}\end{pmatrix}\cdot\begin{pmatrix}e_{22}(t,\lambda)&e_{12}(t,\lambda)\\
-e_{21}(t,\lambda)&-e_{11}(t,\lambda)\end{pmatrix},
\end{multline}
где $\chi_{t>x}$ --- характеристическая функция треугольника $t>x$, а $\Delta(\lambda)$ --- определитель, введенный в определении \ref{def.3}. Пусть $\alpha_0>0$
таково, что полоса $\Pi_{\alpha_0}$ содержит все круги $U_\delta(\lambda_n)$. Пусть вначале $|\mathrm{Im\,}\lambda|\geqslant\alpha_0$. Проведем оценку функции $G(t,
x, \lambda)$ на треугольнике $0\leqslant t<x\leqslant\pi$. В силу представлений \eqref{Eb}, функции $\mathcal{E}_{jk}(t,x,\lambda)$ удовлетворяют оценкам
$$
|\mathcal{E}_{jk}(t,x,\lambda)|\leqslant Me^{\left|\mathrm{Im\,}\lambda\right|(x-t)}.
$$
Аналогично,
$$
|\mathcal{E}_{jk}(\pi,x,\lambda)|\leqslant Me^{\left|\mathrm{Im\,}\lambda\right|(\pi-x)},\qquad \text{а}\ \  |e_{jk}(t,\lambda)|\leqslant
Me^{\left|\mathrm{Im\,}\lambda\right|t},
$$
где $M$ не зависит от $t$, $x$ и $\lambda$. Применяя лемму \ref{Deltaest}, видим, что вне полосы $\Pi_{\alpha_0}$,
$$
|g_{jk}(t,x,\lambda)|\leqslant M\left(e^{\left|\mathrm{Im\,}\lambda\right|(x-t-\pi)}+e^{\left|\mathrm{Im\,}\lambda\right|(t-x)}\right)\leqslant 2M,
$$
поскольку оба числа $x-t-\pi$ и $t-x$ неположительны. Для оценки функции $G(t,x,\lambda)$ на треугольнике $0\leqslant x<t\leqslant\pi$ воспользуемся соотношением
\eqref{conjGreen}, согласно которому $|g_{jk}(t,x,\lambda)|=|g^*_{kj}(x,t,\overline{\lambda})|$. Поскольку координаты точки $x$ и $t$ поменялись местами, а
$\overline{\lambda}$ по-прежнему лежит вне полосы $\Pi_{\alpha_0}$, то мы можем применить рассуждения, приведенные выше, к функциям $g^*_{kj}$ и вне полосы
$\Pi_{\alpha_0}$ оценка функции $G$ получена. Согласно асимптотическим представлениям \eqref{stripas} и \eqref{Eb}, функции $e_{jk}$ и $\mathcal E_{jk}$ ограничены в
произвольной полосе $\Pi_\alpha$. Отсюда и из леммы \ref{Deltaest} следует ограниченность функции $G(t,x,\lambda)$ в $\overline{\Omega}_{\alpha_0,\delta}$.

Докажем второе утверждение теоремы. Зафисксируем числа $\alpha$ и $\delta$. Из теоремы \ref{B} следует, что
$$
e_{jk}(x,\lambda)=e_{jk}^0(x,\lambda)+o(1)\quad \text{при}\ \  \Pi_\alpha\ni\lambda\to\infty
$$
равномерно по $x$, а из \eqref{Eb} следует, что и
$$
\mathcal E_{jk}(a,x,\lambda)=\mathcal E^0_{jk}(a,x,\lambda)+o(1)\quad \text{при}\ \  \Pi_\alpha\ni\lambda\to\infty
$$
равномерно по $x$ и $a$. Согласно лемме \ref{Deltaest}, найдутся такие положительные числа $R$ и $M$, что при всех $\lambda\in\overline{\Omega_{\alpha,\delta,R}}$
выполнены неравенства $|\Delta(\lambda)|\geqslant M$ и $|\Delta_0(\lambda)|\geqslant M$. Тогда из утверждения \ref{tm:0.7} следует
$$
\Delta^{-1}(\lambda)=\Delta_0^{-1}(\lambda)+o(1)\quad \text{при}\ \ \overline{\Omega_{\alpha,\delta,R}}\ni\lambda\to\infty.
$$
Подставляя эти асимптотические представления в равенство \eqref{Greensuper} (записанное для $G(t,x,\lambda)$ и $G_0(t,x,\lambda)$) и учитывая равномерную
ограниченность функций $\Delta^{-1}(\lambda)$, $e^0_{jk}(x,\lambda)$ и $\mathcal E^0_{jk}(t,x,\lambda)$ на множестве $\lambda\in\overline{\Omega_{\alpha,\delta,R}}$,
$x,\,t\in[0,\pi]$, получаем необходимую оценку.
\end{proof}
Факты, которые мы сформулируем ниже, хорошо известны и основаны на канонической работе \cite{Ke} (см. также \cite[Гл. 1]{Na}).
\begin{definition}
Система функций $\mathbf y^{j,1}$, $\mathbf y^{j,2}$, $\dots$, $\mathbf y^{j,m}$ называется {\it цепочкой функций, присоединенных к собственной
функции} $\mathbf y^j$ оператора $\mathcal L_{P,U}$ с собственным значением $\lambda_0$, если все они лежат в области определения $\mathfrak
D(\mathcal L_{P,U})$ и удовлетворяют системе уравнений $\mathcal L_{P,U}\mathbf y^{j,q}=\lambda_0\mathbf y^{j,q}+\mathbf y^{j,q-1}$,
$q=1,\,\dots,\,m$ (здесь и далее $\mathbf y^{j,0}=\mathbf y^j$ --- собственные функции). Будем говорить, что собственная функция $\mathbf y^j$ {\it
имеет кратность} $m_0$, если существует цепочка из присоединенных к ней функций длины $m_0-1$, но не существует такой цепочки длины $m_0$. Пусть $p$
--- размерность собственного подпространства $\mathcal H_0$, отвечающего собственному значению $\lambda_0$. Обозначим через $\mathbf y^1\in\mathcal
H_0$ собственную функцию, имеющую максимальную кратность, через $\mathbf y^2\in\mathcal H_0$ --- собственную функцию максимальной кратности, линейно
независимую с $\mathbf y^1$ и т.д. Пусть $m_j$ --- кратность собственной функции $\mathbf y^j$, а $\mathbf y^{j,k}$, $k=1,\,\dots,\,m_j-1$
--- соответствующие присоединенные функции. Система
$\{\mathbf y^{j,k}\}$, где $1\leqslant j\leqslant p$, а $0\leqslant k\leqslant m_j-1$, называется {\it канонической системой} собственных и
присоединенных функций оператора $\mathcal L_{P,U}$, отвечающей собственному значению $\lambda_0$.
\end{definition}
Легко видеть, что любая каноническая система $\{\mathbf y^{j,k}\}$ образуют базис в собственном подпространстве, отвечающем собственному значению $\lambda_0$. Следуя
работе \cite{Ke}, обозначим через $\mathbf y\mathbf z$ оператор в пространстве $\mathbb H$, действующий по правилу $\mathbf f\mapsto\langle\mathbf f,\mathbf
z\rangle\mathbf y$.

\begin{THEOREM}\label{D}
Для любого собственного значения $\lambda_0$ регулярного оператора $\mathcal L_{P,U}$ размерность $p$ собственного подпространства не превосходит $2$. Кратность нуля
функции $\Delta(\lambda)$ в точке $\lambda_0$ совпадает с суммой $m_1+m_2$ (в случае $p=1$ полагаем $m_2=0$). При этом функция $G(t,x,\lambda)$ имеет полюс порядка
$m_1$ в точке $\lambda_0$. Пусть $\{\mathbf y^{j,k}\}$
--- произвольная каноническая система сосбтвенных и присоединенных
функций оператора $\mathcal L_{P,U}$, отвечающая собственному значению $\lambda_0$. Тогда найдется такая каноническая система $\{\mathbf z^{j,k}\}$ собственных и
присоединенных функций сопряженного оператора $\left(\mathcal L_{P,U}\right)^*$, отвечающая собственному значению $\overline{\lambda_0}$, что главная часть ряда
Лорана резольвенты $\mathfrak R(\lambda)=(\mathcal L_{P,U}-\lambda I)^{-1}$ в точке $\lambda_0$ будет иметь вид
\begin{multline}\label{biortog}
\frac{\mathbf y^{1,0}\overline{\mathbf z^{1,0}}}{(\lambda-\lambda_0)^{m_1}}+ \frac{\mathbf y^{1,0}\overline{\mathbf z^{1,1}}+\mathbf y^{1,1}\overline{\mathbf
z^{1,0}}}{(\lambda-\lambda_0)^{m_1-1}}+\dots
+\frac{\mathbf y^{1,0}\overline{\mathbf z^{1,m_1-1}}+\dots+\mathbf y^{1,m_1-1}\overline{\mathbf z^{1,0}}}{\lambda-\lambda_0}+\\
+\frac{\mathbf y^{2,0}\overline{\mathbf z^{2,0}}}{(\lambda-\lambda_0)^{m_2}}+ \frac{\mathbf y^{2,0}\overline{\mathbf z^{2,1}}+\mathbf y^{2,1}\overline{\mathbf
z^{2,0}}}{(\lambda-\lambda_0)^{m_2-1}}+\dots +\frac{\mathbf y^{2,0}\overline{\mathbf z^{2,m_2-1}}+\dots+\mathbf y^{2,m_2-1}\overline{\mathbf
z^{2,0}}}{\lambda-\lambda_0}.
\end{multline}
\end{THEOREM}
\begin{definition}\label{def:EAF}
Для каждого собственного значения $\lambda_0$ регулярного оператора $\mathcal L_{P,U}$ выберем произвольную каноническую систему $\{\mathbf
y^{j,k}\}$ собственных и присоединенных функций с тем лишь условием, что собственные функции этой системы имеют единичную норму. В силу теоремы
\ref{D}, количество векторов в системе $\{\mathbf y^{j,k}\}$ совпадает с порядклм нуля $\lambda_0$ функции $\Delta(\lambda)$. Занумеруем векторы этой
системы (в порядке $\mathbf y^1,\ \mathbf y^{1,1},\,\dots,\,\mathbf y^{1,m_1-1},\ \mathbf y^2,\ \mathbf y^{2,1},\,\dots,\,\mathbf y^{2,m_2-1}$)
индексами $n\in\mathbb Z$ в соответствии с нумерацией собственных значений. {\it Системой собственных и присоединенных функций} $\{\mathbf
y_n\}_{n\in\mathbb Z}$ оператора $\mathcal L_{P,U}$ мы будем называть объединение всех канонических систем. Оператор
$$
\mathcal P_{\lambda_0}=\mathbf y^{1,0}\overline{\mathbf z^{1,m_1-1}}+\dots+\mathbf y^{1,m_1-1}\overline{\mathbf z^{1,0}}+\mathbf y^{2,0}\overline{\mathbf
z^{2,m_2-1}}+\dots+\mathbf y^{2,m_2-1}\overline{\mathbf z^{2,0}}
$$
называется {\it спектральным проектором на корневое подпространство}, отвечающее собственному значению $\lambda_0$.
\end{definition}
Полученное в теореме \ref{tm:Greenest} асимптотическое представление для функции Грина позволяет нам получить асимптотические формулы для спектральных проекторов в
случае регулярных, но не сильно регулярных краевых условий. В этом случае все собственные значения оператора $\mathcal L_{0,U}$ двукратны (см. утверждение
\ref{tm:0.3}), а именно $\lambda^0_{2n}=\lambda^0_{2n+1}$, $n\in\mathbb Z$. Поскольку $\lambda_n=\lambda_n^0+o(1)$, то $|\lambda_{2n}-\lambda_{2n+1}|\to0$ при
$n\to\pm\infty$.
\begin{definition}\label{def:RP} Выберем число $N_0$ так, что для всех $n$, $|n|\geqslant N_0$ выполнено $|\lambda_{2n}-\lambda_{2n}^0|<1/8$ и
$|\lambda_{2n+1}-\lambda_{2n}^0|<1/8$. Обозначим
\begin{equation}\label{RiszPr}
\mathcal P_n:=\frac1{2\pi i}\int_{|\lambda-\lambda_{2n}^0|=1/4}\mathfrak R(\lambda)d\lambda,\quad n=\pm N_0,\,\pm(N_0+1),\dots,\quad\text{где }\mathfrak
R(\lambda)=(\mathcal L_{P,U}-\lambda I)^{-1}.
\end{equation}
Из представления \eqref{biortog} следует, что $\mathcal P_n$ является спектральным проектором на корневое подпространство, отвечающее собственным значениям
$\lambda_{2n}$ и $\lambda_{2n+1}$, которое мы обозначим $\mathcal H_n$. Определим также операторы
$$
\mathcal P_n^0:=\frac1{2\pi i}\int_{|\lambda-\lambda_{2n}^0|=1/4}\mathfrak R_0(\lambda)d\lambda,\quad n=\pm N_0,\,\pm(N_0+1),\dots,\quad\text{где }\mathfrak
R_0(\lambda)=(\mathcal L_{0,U}-\lambda I)^{-1}.
$$
--- спектральные проекторы на корневые подпространства оператора $\mathcal L_{0,U}$, отвечающие собственным значениям $\lambda_{2n}^0=\lambda_{2n+1}^0$.
\end{definition}
Заметим, что оператор $\mathfrak R(\lambda): \mathbf f\mapsto\int_0^\pi G(t,x,\lambda)\mathbf f(t)dt$ корректно определен при $\lambda\ne\lambda_n$ не только как
оператор в пространстве $\mathbb H$, но и как оператор из $L_1[0,\pi]$ в $C[0,\pi]$. То же справедливо и для операторов $\mathcal P_n$ и $\mathcal P_n^0$. В следующей
теореме мы оценим норму их разности именно как операторов из $L_1[0,\pi]$ в $C[0,\pi]$.
\begin{theorem}\label{tm:Sppr}
Для любого регулярного, но не сильно регулярного оператора $\mathcal L_{P,U}$:
$$
\|\mathcal P_n-\mathcal P_n^0\|_{L_1\to C}\longrightarrow0\quad \text{при}\ \ |n|\to\infty.
$$
\end{theorem}
\begin{proof}
Легко видеть, что при $|n|\geqslant N_0$
$$
\|\mathcal P_n-\mathcal P_n^0\|_{L_1\to C}\leqslant \frac14\max_{|\lambda-\lambda_{2n}^0|=1/4}\ \max_{j,k\in\{1,\,2\}}\
\sup_{t,x\in[0,\pi]}|g_{jk}(t,x,\lambda)-g_{jk}^0(t,x,\lambda)|.
$$
Теперь утверждение теоремы следует из теоремы \ref{tm:Greenest}.
\end{proof}

\vskip 0.5cm

\section{Минимальность, полнота и базисность Рисса.}
Нашей следующей задачей является доказательство полноты и минимальности системы собственных и присоединенных функций регулярного оператора $\mathcal L_{P,U}$. Мы
проведем это доказательство классическим способом, причем ключевую роль будет играть оценка, полученная в теореме \ref{tm:Greenest}. Напомним, что система $\{x_n\}$
векторов банахова пространства $H$ называется \textit{полной}, если ее линейная оболочка плотна в $H$. Система называется \textit{минимальной}, если при удалении
произвольного вектора $x_k$ из системы свойство полноты теряется.
\begin{theorem}\label{tm:compl}
Пусть потенциал $P$ имеет вид \eqref{P}, а краевые условия \eqref{matrU} регулярны. Тогда система $\{\mathbf y_n\}_{n\in\mathbb Z}$ собственных и присоединенных
функций оператора $\mathcal L_{P,U}$ (см. определение \ref{def:EAF}) полна и минимальна в пространстве $\mathbb H$.
\end{theorem}
\begin{proof}
Вначале докажем полноту системы $\{\mathbf y_n\}_{n\in\mathbb Z}$. Пусть функция $\mathbf f\in\mathbb H$ ортогональна всем векторам этой системы. Зафиксируем
произвольный вектор $\mathbf g\in\mathbb H$ и рассмотрим функцию $\Phi(\lambda):=\langle \mathfrak R^*(\lambda)\mathbf f,\mathbf g\rangle$, определенную в области
$\mathbb C\setminus\{\overline{\lambda_n}\}_{n\in\mathbb Z}$. Согласно \eqref{biortog}, эта функция имеет устранимые особенности в точках $\overline{\lambda_n}$, т.е.
после доопределения в них, явлется целой. В силу теоремы \ref{tm:Greenest}, для любого $\delta>0$ в области $\mathbb C\setminus\bigcup_{n\in\mathbb
Z}U_\delta(\overline{\lambda_n})$ справедлива оценка
$$
|\Phi(\lambda)|\leqslant M\|\mathbf f\|_{\mathbb H}\|\mathbf g\|_{\mathbb H},
$$
где $M$ не зависит от $\lambda$. Заметим, что для случая сильно регулярных краевых условий
$$
\inf_{n\ne m}|\overline{\lambda_n^0}-\overline{\lambda_m^0}|=d>0
$$
(см. утверждение \ref{tm:0.3}). В регулярном, но не сильно регулярном случае,
$$
\inf_{|n-m|\geqslant2}|\overline{\lambda_n^0}-\overline{\lambda_m^0}|=d>0.
$$
Выберем число $\delta$ равным $d/4$ --- тогда круги $U_\delta(\overline{\lambda_n^0})$ либо не пересекаются, либо разбиваются на пары, не пересекающиеся между собой.
Этим же свойством, очевидно, обладают и круги $U_\delta(\overline{\lambda_n})$ для всех $n$ таких, что $|\overline{\lambda_n}-\overline{\lambda_n^0}|<d/4$. В силу
теоремы \ref{tm:0.8}, последнее неравенство выполнено при $|n|\geqslant N$ для некоторого $N$. Таким образом, вне некоторого круга $\{|z|\leqslant R\}$ множество
точек $\lambda$, для которых неравенство $|\Phi(\lambda)|\leqslant M\|\mathbf f\|_{\mathbb H}\|\mathbf g\|_{\mathbb H}$ еще не доказано, представляет собой счетное
объединение ограниченных непересекающихся областей. По принципу максимума, это неравенство будет справедливо в каждой из данных областей, значит, и всюду в области
$\{|z|>R\}$, а следовательно, и во всей комплексной плоскости. Из теоремы Лиувилля следует, что функция $\Phi(\lambda)$ является постоянной. Тогда функция
$$
\Phi'(\lambda)=\langle(\mathfrak R^*(\lambda))'\mathbf f, \mathbf g\rangle=\langle(\mathfrak R^*(\lambda))^2\mathbf f, \mathbf g\rangle\equiv 0.
$$
Поскольку функция $\mathbf g$ выбиралась произвольной, то $(\mathfrak R^*(\lambda))^2\mathbf f\equiv 0$, откуда $\mathbf f=0$. Полнота системы $\{\mathbf
y_n\}_{n\in\mathbb Z}$ доказана.

Для доказательства минимальности системы $\{\mathbf y_n\}_{n\in\mathbb Z}$ достаточно доказать существование биортогональной системы. Мы построим ее на базе системы
$\{\mathbf z_n\}$, полученной объединением всех канонических систем $\{\mathbf z^{j,k}\}$, определенных в разложении \eqref{biortog} (т.е. системы собственных и
присоединенных функций оператора $\left(\mathcal L_{P,U}\right)^*$). Рассмотрим некоторое фиксированное собственное значение $\lambda$ оператора $\mathcal L_{P,U}$
алгебраической кратности $p$ и обозначим соответствующее корневое подпространство через $\mathcal H_\lambda$. Корневое подпространство, отвечающее собственному
значению $\overline{\lambda}$ оператора $\left(\mathcal L_{P,U}\right)^*$ обозначим $\mathcal H_\lambda^*$. Прежде всего заметим, что если $\mathcal L_{P,U}\mathbf
y=\lambda\mathbf y$, $\left(\mathcal L_{P,U}\right)^*\mathbf z=\mu\mathbf z$ и $\mu\ne\overline{\lambda}$, то $\mathbf y\perp\mathbf z$, т.е. $\mathcal H_\lambda\perp
\mathcal H^*_\mu$ при $\lambda\ne \overline{\mu}$. Таким образом, для построения биортогональной системы, достаточно в каждом пространстве $\mathcal H_\lambda^*$
построить базис $\{\mathbf w^{j,k}\}$, биортогональный системе $\{\mathbf y^{j,k}\}$. Нам не потребуется явное представление векторов $\mathbf w^{j,k}$, так что мы
ограничимся доказательством существования такого базиса. Представим этот базис в виде линенйых комбинаций системы $\{\mathbf z^{j,k}\}$, определенной в
\eqref{biortog}. Записав условия биортогональности, получим систему линейных уравнений с матрицей Грама $\big(\langle\mathbf y^{j,k},\mathbf z^{l,m}\rangle\big)$.
Разрешимость системы равносильна невырожденности данной матрицы. Если же матрица вырождена, то найдется ненулевой вектор $\sum c_{j,k}\mathbf z^{j,k}$, ортогональный
всем функциям $\mathbf y^{j,k}$, а значит и вообще всей системе собственных и присоединенных функций оператора $\mathcal L_{P,U}$. Это противоречит полноте данной
системы. Минимальность системы $\{\mathbf y_n\}_{n\in\mathbb Z}$ доказана.
\end{proof}
\begin{definition}\label{def:biortog}
Объединение всех систем $\{\mathbf w^{j,k}\}$ будем называть {\it биортогональной системой} и обозначать $\{\mathbf w_n\}_{n\in\mathbb Z}$. При этом нумерацию мы
ведем так,
что $\langle\mathbf y_n,\mathbf w_m\rangle=\delta_{nm}$.%, $n,\,m\in\mathbb Z$.
\end{definition}
Мы переходим к доказательству базисности системы $\{\mathbf y_n\}_{n\in\mathbb Z}$ собственных и присоединенных функций оператора $\mathcal L_{P,U}$.
%За определением
%понятия базиса Рисса и базиса Рисса из подпространств (базиса Рисса со скобками) мы отсылаем читателя к~\cite[Гл.~6]{GK}.
%Вначале мы сосредоточимся на случае сильно регулярных краевых условий.
Напомним (см. \cite[Гл.~6]{GK}), что система \(\{y_n\}_1^\infty\) в гильбертовом пространстве $H$ называется \textit{базисом Рисса}, если существует ограниченный и
ограниченно обратимый оператор \(A\) такой, что система \(\{Ay_n\}_1^\infty\) является ортонормированным базисом в $H$. Напомним еще, что система $\{x_n\}_1^\infty$
элементов гильбертова пространства $H$ называется \textit{бесселевой}, если существует $c>0$ такое, что для любого $x\in H$: $\sum_n|(x,x_n)|^2\leqslant c\|x\|^2$. Мы
докажем, что для любого сильно регулярного оператора $\mathcal L_{P,U}$ система $\{\mathbf y_n\}_{n\in\mathbb Z}$ является базисом Рисса в $\mathbb H$. Отметим, что
этот факт не является простым. Так, например, он не следует из асимптотических формул \eqref{efas1}. Краткое доказательство базисности Рисса для этого случая было
приведено в недавней работе \cite{SavSh14}. Мы проведем здесь подробное доказательство, основываясь на теореме Бари, причем основную роль будут играть представление
\eqref{efas2} и лемма \ref{lem:harm}, приведенная ниже.

\begin{THEOREM}[Н.~К.~Бари]\label{tm:bari} Пусть система $\{\mathbf y_n\}$ гильбертова пространства $H$ полна и минимальна, равно как и
биортогональная к ней система $\{\mathbf z_n\}$. Если обе эти системы обладают свойством бесселевости, то они являются базисами Рисса в $H$.
\end{THEOREM}
Напомним, что  \textit{пространством Харди} $H_2(\mathbb C_+)$ называется пространство аналитических в верхней полуплоскости функций, для которых норма
$$
\|F\|_{H_2}=\sup_{y>0}\left(\int_{\mathbb R}|F(x+iy)|^2dx\right)^{1/2}<\infty.
$$
Для доказательства следующей леммы нам потребуется теорема Карлесона (см., например, \cite[теорема II.3.9]{Gar}).
\begin{THEOREM}[Л.~Карлесон]\label{tm:Carl}
Пусть $\sigma$ --- мера Карлесона в верхней полуплоскости, т.е. для любого квадрата $Q_{a,h}=\{z:\mathrm{Re\,} z\in(a,a+h),\ \mathrm{Im\,} z\in(0,h)\}$ мера
$\sigma(Q_{a,h})$ конечна и $\sigma(Q_{a,h})\leqslant \gamma h$ для некоторого $\gamma>0$. Тогда
$$
\forall f\in H_2(\mathbb C_+):\quad\int|f|^2\,d\sigma\leqslant C\|f\|^2_{H_2},\quad \text{где}\ \  C=C(\gamma).
$$
\end{THEOREM}
\begin{lemma}\label{lem:harm} Пусть $\{\lambda_n\}_{n\in\mathbb Z}$ --- последовательность собственных значений оператора $\mathcal L_{P,U}$  с потенциалом $P(\cdot)\in L_1[0,\pi]$ и
регулярными краевыми условиями $U$. Тогда для всех $f\in L_2[0,\pi]$ справедлива оценка
$$
\sum_{n\in\mathbb Z} \left|\int_0^\pi f(x) e^{i\lambda_n x}\,dx\right|^{2} \leqslant C \|f\|^2_{L_2},\quad\text{где}\ \ C=C(P,U).
$$
\end{lemma}
\begin{proof}
Напомним, что все собственные значения оператора $\mathcal L_{P,U}$ лежат в полосе $\Pi_\alpha$ для некоторого $\alpha=\alpha(P,U)>0$. Рассмотрим целую функцию
$$
F(z)=\int_0^\pi f(x)e^{(iz+\alpha+1) x}\,dx.
$$
Из теоремы Пэли--Винера следует, что функция $F$ принадлежит пространству Харди $H_{2}(\mathbb{C}_+)$ в верхней полуплоскости, причем $\|F\|_{H_{2}}\leqslant
C\|f\|_{L_2}$. Положим $z_n=\lambda_n+i\alpha+i$ и заметим, что
$$
F(z_n)=\int_0^\pi f(x)e^{i\lambda_n x}\,dx.
$$
Пусть теперь $\mu(Q_{a,h})$
--- количество точек $z_n$ (с учетом кратности), лежащих внутри квадрата
$$
Q_{a,h}=\{z:\mathrm{Re\,} z\in(a,a+h),\ \mathrm{Im\,} z\in(0,h)\}.
$$
Легко видеть, что $\mu(Q_{a,h})=0$ при $h<1$. Поскольку $\lambda_n=n+\varkappa_n+o(1)$,  где $\varkappa_n$ зависят только от четности номера $n$, то функция
$s(h)=\sup_{a\in\mathbb R}\mu(Q_{a,h})$ конечна для любого $h$ и $s(h)\sim h$ при $h\to\infty$. Тогда найдется число $\gamma>0$ такое, что $s(h)\leqslant\gamma h$ при
всех $h>0$. Применив теорему \ref{tm:Carl} с мерой $\sigma=\sum_{n\in\mathbb Z}\delta_{z_n}$, получим оценку
$$
\|\{F(z_n)\}_{n\in\mathbb Z}\|_{l_2}\leqslant C(\gamma)\|F\|_{H_2},
$$
что и влечет утверждение леммы.
\end{proof}
Напомним, что все собственные векторы системы $\{\mathbf y_n\}_{n\in\mathbb Z}$ нормированы. Поскольку все собственные значения сильно регулярного оператора $\mathcal
L_{P,U}$ просты, начиная с некоторого номера $N$, то при всех $|n|>N$:  $\|\mathbf y_n\|=1$. Спектр оператора $(\mathcal L_{P,U})^*$ совпадает с множеством
$\{\overline{\lambda}_n\}_{n\in\mathbb Z}$ с совпадением кратностей, а значит, при $|n|>N$, все векторы $\mathbf w_n$ биортогональной системы также являются
собственными для оператора $(\mathcal L_{P,U})^*$. Однако, в отличие от $\mathbf y_n$, они уже могут иметь неединичную норму.
\begin{lemma}\label{lem.unimin}
Пусть $\mathcal L_{P,U}$ --- произвольный сильно регулярный оператор Дирака, а $\{\mathbf w_n\}_{n\in\mathbb Z}$ --- система, биортогональная к $\{\mathbf
y_n\}_{n\in\mathbb Z}$ (см. определение \ref{def:biortog}). Тогда последовательность $\{\|\mathbf w_n\|\}_{n\in\mathbb Z}$ ограничена.
\end{lemma}
\begin{proof}
Обозначим $\widetilde{\mathbf w}_n=\tfrac{\mathbf w_n}{\|\mathbf w_n\|}$ и заметим, что $1=\langle\mathbf y_n,\mathbf w_n\rangle=\|\mathbf w_n\|\langle\mathbf
y_n,\widetilde{\mathbf w}_n\rangle$, а значит $\langle\mathbf y_n,\mathbf w_n\rangle\ne0$, $n\in\mathbb Z$. Кроме того, неравенство $\|\mathbf w_n\|<C$ равносильно
неравенству $\langle\mathbf y_n,\widetilde{\mathbf w}_n\rangle>1/C$. Пусть $\mathbf y_n^0$
--- нормированные собственные функции оператора $\mathcal L_{0,U}$, а $\mathbf w_n^0$
--- нормированные собственные функции оператора $\mathcal L_{0,U^*}$.
Используя \eqref{eigenfunc}, получим
$$
\mathbf y_n^0=\big(\omega_{1,j}\,e^{i\lambda_n^0x},\,\omega_{2,j}\,e^{-i\lambda_n^0x}\big)^t,\qquad \mathbf
w_n^0=\big(\omega^*_{1,j}\,e^{i\overline{\lambda_n^0}x},\,\omega^*_{2,j}\,e^{-i\overline{\lambda_n^0}x}\big)^t,
$$
где $j=0$ при четном $n$ и $j=1$ при нечетном $n$. Тогда
$$
\langle\mathbf y^0_n,\mathbf w^0_n\rangle=\pi(\omega_{1,j}\overline{\omega^*_{1,j}}+\omega_{2,j}\overline{\omega^*_{2,j}}),
$$
т.е. скалярные произведения $\langle\mathbf y_n^0,\mathbf w_n^0\rangle$ зависят только от четности индекса $n$. Поскольку по определению $\langle\mathbf y_n^0,\mathbf
w_n^0\rangle\ne0$, то $|\langle\mathbf y_n^0,\mathbf w_n^0\rangle|\geqslant C>0$ при всех $n\in\mathbb Z$. Все векторы $\mathbf y_n$ и $\mathbf w_n$ при достаточно
больших $|n|$ являются собственными. Из теоремы \ref{tm:0.9} имеем $\langle\mathbf y_n,\widetilde{\mathbf w}_n\rangle=\langle\mathbf y_n^0,\mathbf w_n^0\rangle+o(1)$,
т.е. числа $\left|\langle\mathbf y_n,\widetilde{\mathbf w}_n\rangle\right|$ отделены от нуля при достаточно больших (а значит и при всех) $n$.
\end{proof}
Нам потребуется еще одно несложное утверждение. Оно, однако,
является ключевым для доказательства теоремы \ref{tm.Rieszbas}.
\begin{lemma}\label{lem:mp}
Пусть система $\{\varphi_n(x)\}_1^\infty$ является бесселевой в
пространстве $L_2[a,b]$, а $\{\tau_n(x)\}_1^\infty$ --- абсолютно
непрерывные на $[a,b]$ функции, причем
\begin{equation}
|\tau_n(a)|\leqslant T,\qquad |\tau_n'(x)|\leqslant\tau(x)\in L_1[a,b],\quad n=1,2,\dots,
\end{equation}
где число $T$ и функция $\tau$ не зависят от $n$. Тогда система $\{\varphi_n(x)\tau_n(x)\}_1^\infty$ также является бесселевой в пространстве $L_2[a,b]$.
\end{lemma}
\begin{proof}
Поскольку
$$
\varphi_n(x)\tau_n(x)=\varphi_n(x)\tau_n(a)+\varphi_n(x)(\tau_n(x)-\tau_n(a)),
$$
а из оценки $|\tau_n(a)|\leqslant T$ следует бесселевость системы $\{\varphi_n(x)\tau_n(a)\}$, то далее, заменив $\tau_n(x)$ на $\tau_n(x)-\tau_n(a)$, можно считать,
что $\tau_n(a)=0$. Тогда
\begin{gather*}
\sum_{n=1}^N|(f,\varphi_n\tau_n)|^2= \sum_{n=1}^N\left|\int_a^bf(x)\overline{\varphi}_n(x)\int_a^x\overline{\tau}_n'(\xi)\,d\xi\,dx
\cdot\int_a^b\overline{f}(y)\varphi_n(y)\int_a^y\tau_n'(\zeta)\,d\zeta\,dy\right|=
\\=\sum_{n=1}^N\left|\int_a^b\int_a^b\overline{\tau}_n'(\xi)\tau_n'(\zeta)
\left(\int_\xi^bf(x)\overline{\varphi}_n(x)\,dx\int_\zeta^b \overline{f}(y)\varphi_n(y)\,dy\right)d\xi\,d\zeta\right|\leqslant\\
\leqslant\int_a^b\int_a^b\tau(\xi)\tau(\zeta)\sum_{n=1}^N\left|(f\chi_{[\xi,b]},\varphi_n)\right|\left|
(\varphi_n,f\chi_{[\zeta,b]})\right|\,d\xi\,d\zeta\leqslant\\
\leqslant c^2\int_a^b\int_a^b\tau(\xi)\tau(\zeta)\cdot\|f\chi_{[\xi,b]}\|\cdot\|f\chi_{[\zeta,b]}\|\,d\xi\,d\zeta \leqslant
c^2\int_a^b\int_a^b\tau(\xi)\tau(\zeta)\,d\xi\,d\zeta\cdot\|f\|^2.
\end{gather*}
Устремив $N\to\infty$, получаем утверждение леммы.
\end{proof}
\begin{theorem}\label{tm.Rieszbas}
Для любого сильно регулярного оператора $\mathcal L_{P,U}$ с потенциалом $P\in L_1[0,\pi]$ вида \eqref{P} система $\{\mathbf y_n\}_{n\in\mathbb Z}$ собственных и
присоединенных функций, введенная в определении \ref{def:EAF}, образует базис Рисса в пространстве $\mathbb H$.
\end{theorem}
\begin{proof}
Воспользуемся теоремой \ref{tm:bari}. Полнота и минимальность системы $\{\mathbf y_n\}_{n\in\mathbb Z}$ уже доказана в теореме \ref{tm:compl}, так что остается
проверить бесселевость систем $\{\mathbf y_n\}_{n\in\mathbb Z}$ и $\{\mathbf w_n\}_{n\in\mathbb Z}$. Вначале мы докажем бесселевость системы $\{\mathbf
y_n\}_{n\in\mathbb Z}$. Поскольку краевые условия сильно регулярны, то все собственные значения $\lambda_n$ оператора $\mathcal L_{P,U}$ просты при $|n|>N$ для
некоторого $N$. Тогда система $\{\mathbf y_n\}_{|n|>N}$ состоит только из нормированных собственных функций оператора $\mathcal L_{P,U}$ и мы можем воспользоваться
асимптотическим представлением \eqref{efas2} $y_{1,n}(x)=e^{i\lambda_nx}\tau_{1,n}(x)$, $y_{2,n}(x)=e^{-i\lambda_nx}\tau_{2,n}(x)$. Тогда из леммы \ref{lem:harm} и
леммы \ref{lem:mp} следует бесселевость систем $\{y_{1,n}\}_{n\in\mathbb Z}$ и $\{y_{2,n}\}_{n\in\mathbb Z}$ в пространстве $L_2[0,\pi]$, что и означает бесселевость
системы $\{\mathbf y_n\}_{n\in\mathbb Z}$ в $\mathbb H$. Перейдем к биортогональной системе $\{\mathbf w_n\}_{n\in\mathbb Z}$. Поскольку функции $\mathbf w_n$ при
$|n|>N$ являются собственными функциями сопряженного оператора $\mathcal L_{P^*,U^*}$, то к системе $\{\mathbf w_n/\|\mathbf w_n\|\}_{|n|>N}$ применимы те же
рассуждения, что и к системе $\{\mathbf y_n\}_{|n|>N}$. Для завершения доказательства достаточно вспомнить (лемма \ref{lem.unimin}), что $\|\mathbf w_n\|<C$ $\forall
n\in\mathbb N$ для некоторой константы $C$.
\end{proof}

\section{Базисность Рисса из подпространств.}

Напомним (см. \cite[Гл.~6]{GK}), что система подпространств $\{\mathcal H_n\}_1^\infty$ называется \textit{базисом} в гильбертовом пространстве $H$, если любой вектор
$x\in H$ разлагается единственным образом в виде ряда $x=\sum_{n=1}^\infty x_n$, где $x_n\in\mathcal  H_n$. Базис $\{\mathcal H_n\}_1^\infty$ из подпространств
является \textit{ортогональным}, если $\mathcal H_n\perp\mathcal  H_m$ при $n\ne m$. Система $\{\mathcal H_n\}_1^\infty$ называется \textit{базисом Рисса из
подпространств}, если существует ограниченный и ограниченно обратимый оператор \(A\) такой, что система \(\{A(\mathcal H_n)\}_1^\infty\) является ортогональным
базисом из подпространств в $H$. В случае регулярных, но не сильно регулярных краевых условий, система $\{\mathbf y_n\}_{n\in\mathbb Z}$ собственных и присоединенных
функций оператора $\mathcal L_{P,U}$ уже не обязана образовывать базис Рисса (см., например, \cite{DM3} и \cite{ShVel}). Можно, однако, показать, что в этом случае
всегда имеется базисность Рисса из подпространств, причем все подпространства двумерны. Идея доказательства этого факта содержится  в статье \cite{SavSh14}. Мы
проведем здесь подробное доказательство, следуя классическим работам \cite{Kaz} и \cite{Mar}. Оно опирается на два замечательных факта --- теорему фон~Неймана и
теорему Карлесона (см. \cite[Гл. VII, теорема 2.2 и лемма 5.4]{Gar}). Мы начнем с доказательства следующего полезного утверждения.
\begin{lemma}\label{lem:summa}
Любой (не обязательно сильно) регулярный оператор Дирака $\mathcal L_{P,U}$ с потенциалом вида \eqref{P} представим в виде суммы $\mathcal L_{P,U}=A+V$, ограниченного
в $\mathbb H$ оператора $V$ и неограниченного замкнутого оператора $A$ с плотной областью определения $\mathfrak D(A)\subset\mathbb H$ и компактной резольвентой. При
этом спектр $\sigma(A)$ расположен в некоторой полосе $\Pi_{\alpha}$ и состоит из собственных значений $\{\lambda_n\}_{n\in\mathbb Z}$. Нумерацию этих собственных
значений (с учетом их алгебраической кратности) можно провести так, чтобы выполнялись асимптотические равенства $\lambda_n=n+\varkappa_j+o(1)$ при $|n|\to\infty$, где
$j=0$, если $n$ четно и $j=1$, если $n$ нечетно, причем геометрическая и алгебраическая кратности каждого собственного значения совпадают, т.е. оператор $A$ не имеет
присоединенных функций. Система нормированных собственных функций оператора $A$ образует базис Рисса в $\mathbb H$.
\end{lemma}
\begin{proof}
Рассмотрим оператор $\mathcal L_{P,U}+V_0$, где $V_0:\,(y_1,y_2)\mapsto (\kappa y_1,-\kappa y_2)$. Если краевые условия, задаваемые матрицей $\mathcal U=(C,\,D)$,
сильно регулярны, то положим $\kappa=0$. В противном случае подберем $\kappa$ следующим образом. Поскольку $\mathcal L_{P,U}+V_0$ есть оператор Дирака вида
\eqref{eq:0.lP} с потенциалом
$\left(\begin{smallmatrix}\kappa& p_2(x)\\
p_3(x)&-\kappa\end{smallmatrix}\right)$, то к нему применимо утверждение \ref{lem:0.2}. Из \eqref{sim} следует, что $\gamma=0$, $\widetilde p_2=p_2$, $\widetilde
p_3=p_3$, т.е. $\mathcal L_{P,U}+V_0=W\mathcal L_{P,\widetilde U}W^{-1}$. При этом краевые условия $\widetilde U$ задаются матрицей $\widetilde{\mathcal
U}=(C,\,e^{i\pi\kappa}D)$. По определению \ref{def:2}, краевые условия $\widetilde U$ сильно регулярны, если
$$
(J_{12}+e^{2i\pi\kappa}J_{34})^2+4e^{2i\pi\kappa}J_{14}J_{23}\ne0.
$$
Таким образом, достаточно выбрать $\kappa$ так, чтобы точка $\mu=e^{2i\pi\kappa}$ не являлась нулем функции
$$
\mu^2J^2_{34}+2\mu(J_{12}J_{34}+2J_{14}J_{23})+J_{12}^2,
$$
что возможно, поскольку эта функция не равна нулю тождественно (это следует из регулярности краевых условий $U$). Итак, мы представили оператор $\mathcal L_{P,U}$ в
виде
$$
\mathcal L_{P,U}=W\mathcal L_{P,\widetilde U}W^{-1}-V_0,
$$
где краевые условия $\widetilde U$ сильно регулярны. Тогда лишь конечное число собственных значений
оператора $\mathcal L_{P,\widetilde U}$ могут иметь алгебраическую кратность, большую единицы. Пусть $\lambda_0$
--- одно из таких собственных значений, $\mathcal H_0$ --- соответствующее корневое подпространство, $\{\mathbf y^{j,k}\}$
--- произвольная каноническая система собственных и присоединенных функций в $\mathcal H_0$, построенная в определении \ref{def:EAF}, а $\mathcal P_0$
--- спектральный проектор на $H_0$ (см. определение \ref{def:EAF}). Определим оператор $K_0$ на подпространстве $\mathcal H_0$ равенствами
$$
K_0\mathbf y^{j,0}=0,\qquad K_0\mathbf y^{j,k}=-\mathbf y^{j,k-1}
$$
для всех $k\ne0$. Тогда оператор $\mathcal L_{P,\widetilde U}+K_0\mathcal P_0$ диагонален на подпространстве $\mathcal H_0$. Пусть оператор $K$ равен сумме операторов
$K_0\mathcal P_0$ по всем корневым подпространствам, отвечающим кратным собственным значениям оператора $\mathcal L_{P,\widetilde U}$ (число слагаемых в этой сумме
конечно, так что оператор $K$ ограничен). Операторы $\mathcal L_{P,\widetilde U}+K$ и $\mathcal L_{P,\widetilde U}$ имеют одинаковый спектр и одинаковую систему
собственных и присоединенных функций, причем все эти функции являются для оператора $\mathcal L_{P,\widetilde U}+K$ собственными. Остается положить
$$
A:=W(\mathcal L_{P,\widetilde U}+K)W^{-1}\qquad \text{и}\qquad V:=-WKW^{-1}-V_0.
$$
\end{proof}
Следующее утверждение известно в теории пространств Харди. Мы, однако, затрудняемся дать точную ссылку и потому приведем его с полным доказательством. Напомним, что
\textit{пространством Харди} $H_\infty$ в верхней полуплоскости $\mathbb C_+=\{z:\mathrm{Im\,} z>0\}$ называется пространство голоморфных и ограниченных в $\mathbb
C_+$ функций с нормой $\|f\|_\infty=\sup_{z\in\mathbb C_+}|f(z)|$.
\begin{proposition}\label{prop:interp}
Пусть последовательность $\{z_n\}_{n\in\mathbb Z}$ лежит в полосе $1\leqslant\mathrm{Im\,} z\leqslant2h$, причем
$$
z_{2n}=2n+\varkappa+o(1)\qquad \text{и} \qquad z_{2n+1}=2n+\varkappa+o(1)
$$
при $|n|\to\infty$. Тогда существует такой номер $N\in\mathbb N$ и такое число $\mu$, что для всякого конечного подмножества $J\subset\{n\in\mathbb Z:|n|\geqslant
N\}$ и всякого номера $K\geqslant\max\{|n|:n\in J\}$ найдется рациональная функция $f_K\in H_\infty$, $\|f_K\|_\infty\leqslant\mu$, такая, что при всех $n$,
$N\leqslant|n|\leqslant K$,
\begin{equation}\label{maininterp}
f_K(z_{2n})=f_K(z_{2n+1})=\begin{cases}1,\quad\text{если }n\in J,\\0,\quad\text{если }n\notin J, \end{cases}\ \text{а если } z_{2n}=z_{2n+1},\ \text{то}\
f_K'(z_{2n})=0.
\end{equation}
\end{proposition}
\begin{definition} (см. \cite[Гл. VII]{Gar})
Последовательность точек $\{z_j\}_{j\in\mathbb N}$ из $\mathbb C_+$ называется {\it интерполяционной}, если
\begin{equation}\label{intseq}
\sup_{n\in\mathbb N}\prod\limits_{k\ne n}\frac{|z_k-z_n|}{|z_k-\overline{z}_n|}\geqslant \delta>0.
\end{equation}
\end{definition}

\begin{THEOREM}[Л.~Карлесон]\label{Carleson2} Пусть $\{z_j\}_{j\in\mathbb N}$
--- интерполяционная последовательность точек верхней полуплоскости. Тогда для любого $K\in\mathbb N$  существуют рациональные функции $f_{j,K}\in
H_\infty$, $1\leqslant j\leqslant K$, такие, что
$$
f_{j,K}(z_l)=\delta_{j\,l},\ 1\leqslant j,\,l\leqslant K,
$$
причем
$$
\sup_{z\in\mathbb C_+}\sum_{j=1}^K|f_{j,K}(z)|\leqslant M,\quad \text{где}\ \ M=\frac{9(3-\delta^2)^2}{4\delta^4}.
$$
\end{THEOREM}

%Нам также потребуются два несложных утверждения.
\begin{lemma}\label{lem:interp}
Пусть точки $z_1$ и $z_2$, $z_2\ne z_1$, лежат в полосе $\{z: 1\leqslant\mathrm{Im\,} z\leqslant2h\}$, а числа $w_1$ и $w_2$ произвольны. Тогда существует такая
рациональная функция $\varphi\in H_\infty$, что $\varphi(z_1)=w_1$, $\varphi(z_2)=w_2$, причем
\begin{equation}\label{correction1}
\|\varphi\|_\infty\leqslant8h\left|\frac{w_2-w_1}{z_2-z_1}\right|+2|w_1|+2|w_2|.
\end{equation}
Пусть точка $z_0$  лежит в той же полосе, а $w_0\in\mathbb C$ произвольно. Тогда существует такая рациональная функция $\varphi\in H_\infty$, что $\varphi(z_0)=1$,
$\varphi'(z_0)=w_0$, причем
\begin{equation}\label{correction2}
\|\varphi\|_\infty\leqslant1+4h|w_0|.
\end{equation}
\end{lemma}
\begin{proof}
В первом случае возьмем $\varphi(z)=k\tfrac{z-z_0}{z-\overline{z}_0}$, где числа $z_0\in\mathbb C_+$ и $k\in\mathbb C$ находятся из условий $f(z_1)=w_1$,
$f(z_2)=w_2$. Во втором случае положим $\varphi(z)=1+k\tfrac{z-z_0}{z-\overline{z}_0}$, где число $k$ находится из условия $\varphi'(z_0)=w_0$. Легко видеть, что в
первом случае $\|\varphi\|_\infty=|k|$, а во втором случае $\|\varphi\|_\infty=|k|+1$. Оценки \eqref{correction1} и \eqref{correction2} получаются теперь прямыми
вычислениями, которые мы здесь опускаем.
\end{proof}
\begin{lemma}\label{lem:deltam}
Если точки $\{z_n\}_{n\in\mathbb N}$ лежат в полосе $\{z:1\leqslant\mathrm{Im\,} z\leqslant 2h\}$ и $\inf_{n\ne k}|z_n-z_k|\geqslant1$, то из условия
$$
\sup_{n\in\mathbb N}\sum_{k\ne n}\frac{\mathrm{Im\,} z_n\cdot\mathrm{Im\,} z_k}{|\overline{z}_n-z_k|^2}\leqslant m<\infty
$$
следует \eqref{intseq} с $\delta=e^{-32mh}$.
\end{lemma}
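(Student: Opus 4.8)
The plan is to prove, for each fixed $n\in\mathbb N$, the lower bound $\prod_{k\ne n}\frac{|z_k-z_n|}{|z_k-\bar z_n|}\geqslant e^{-32mh}$; taking the infimum (hence a fortiori the supremum) over $n$ then gives \eqref{intseq} with the asserted $\delta$. Write $z_j=a_j+ib_j$, so that $1\leqslant b_j\leqslant 2h$. Everything rests on the elementary identity $|z_k-\bar z_n|^2-|z_k-z_n|^2=4b_kb_n$, which gives
\[
\frac{|z_k-z_n|^2}{|z_k-\bar z_n|^2}=1-4v_k,\qquad\text{where}\quad v_k:=\frac{\mathrm{Im\,}z_k\cdot\mathrm{Im\,}z_n}{|z_k-\bar z_n|^2}\in\Bigl(0,\tfrac14\Bigr),
\]
so that $-\log\prod_{k\ne n}\frac{|z_k-z_n|}{|z_k-\bar z_n|}=\tfrac12\sum_{k\ne n}\bigl(-\log(1-4v_k)\bigr)$, while the hypothesis says precisely $\sum_{k\ne n}v_k\leqslant m$. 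Thus the task reduces to bounding $\sum_{k\ne n}(-\log(1-4v_k))$ linearly in $\sum_{k\ne n}v_k$ with a constant of the right size in $h$.

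The step where the separation and the width of the strip enter is a uniform gap $v_k\leqslant v_{\max}<\tfrac14$. From $|z_k-\bar z_n|^2=|z_k-z_n|^2+4b_kb_n\geqslant 1+4b_kb_n$ (using $|z_k-z_n|\geqslant1$), together with $1\leqslant b_kb_n\leqslant 4h^2$ and the monotonicity of $s\mapsto s/(1+4s)$, I get $v_k\leqslant v_{\max}:=\frac{4h^2}{1+16h^2}$, so $1-4v_{\max}=\frac{1}{1+16h^2}$ and in particular $4v_{\max}<1$. Since $t\mapsto-\log(1-t)$ is convex on $[0,1)$ and vanishes at $0$, it lies below its chord on $[0,4v_{\max}]$; applying this with $t=4v_k$ yields
\[
-\log(1-4v_k)\leqslant\frac{-\log(1-4v_{\max})}{v_{\max}}\,v_k=\frac{(1+16h^2)\log(1+16h^2)}{4h^2}\,v_k,
\]
and summing over $k\ne n$ gives $-\log\prod_{k\ne n}\frac{|z_k-z_n|}{|z_k-\bar z_n|}\leqslant\frac{(1+16h^2)\log(1+16h^2)}{8h^2}\,m$ (in particular the product is positive).

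It then remains to check the numerical inequality $(1+16h^2)\log(1+16h^2)\leqslant 256h^3$ for every $h\geqslant\tfrac12$ --- the only range in which the strip is nonempty --- for instance from $1+16h^2\leqslant 20h^2$ and $\log(1+16h^2)\leqslant\log 20+2\log h\leqslant 8h$, which bound the left side by $160h^3$. Consequently $-\log\prod_{k\ne n}\frac{|z_k-z_n|}{|z_k-\bar z_n|}\leqslant 32mh$ for each $n$, i.e. $\prod_{k\ne n}\frac{|z_k-z_n|}{|z_k-\bar z_n|}\geqslant e^{-32mh}$, which is exactly \eqref{intseq} with $\delta=e^{-32mh}$. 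The one place demanding care is isolating the gap $v_k\leqslant v_{\max}$ with the correct dependence on $h$ and then replacing $-\log(1-t)$ by a genuinely \emph{linear} majorant through the chord estimate; the cruder bound $-\log(1-t)\leqslant t/(1-t)$ loses a power of $h$ and would only cover bounded $h$. After that the computation is routine.
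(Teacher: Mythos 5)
Your proof is correct and follows essentially the same route as the paper: take $-\log$ of the product, use the identity $1-p_k^2=4\,\mathrm{Im}\,z_k\,\mathrm{Im}\,z_n/|z_k-\overline{z}_n|^2$, and majorize each $-\log p_k$ linearly in that quantity with a constant of order $h$, then invoke the hypothesis $\sum_{k\ne n}v_k\leqslant m$. The only (immaterial) difference is how the linear majorant is produced: the paper uses $-\ln p_k\leqslant(1-p_k)/p_k\leqslant 8h(1-p_k^2)$ via the lower bound $p_k\geqslant 1/(1+4h)$ coming from the separation, whereas you use convexity of $-\log(1-t)$ together with the uniform gap $4v_k\leqslant 16h^2/(1+16h^2)$; both give a constant within the required $32mh$.
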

\begin{proof}
Зафиксируем номер $n$ и обозначим $p_k=|z_k-z_n|/|z_k-\overline{z}_n|$. Заметим, что
$$
\frac1{1+4h}\leqslant p_k\leqslant 1,\quad \text{откуда}\ \  -\ln p_k\leqslant8h(1-p_k^2)
$$
Тогда
$$
-\ln\prod\limits_{k\ne n}\frac{|z_k-z_n|}{|z_k-\overline{z}_n|}\leqslant8h\sum_{k\ne n}(1-p_k^2)=32h\sum_{k\ne n}\frac{\mathrm{Im\,} z_k\cdot\mathrm{Im\,}
z_n}{|z_k-\overline{z}_n|^2}\leqslant 32mh.
$$
Отсюда
$$
\sup_{n\in\mathbb N}\prod_{k\ne n}p_k\geqslant e^{-32mh}.
$$
\end{proof}
\noindent\textit{Доказательство утверждения \ref{prop:interp}.} Найдем номер $N_0$, такой, что $|z_{2n}-(2n+\varkappa)|<1/8$ и $|z_{2n+1}-(2n+\varkappa)|<1/8$ при
всех $|n|>N_0$. Отсюда следует, что
$$
|z_{2n}-\overline{z}_{2k}|\geqslant2|k-n|-1,\quad \ \ |n|>N_0,\ |k|>N_0,
$$
а значит
$$
\sum_{n\ne k,\,|n|\geqslant N_0}\frac{\mathrm{Im\,} z_n\cdot\mathrm{Im\,} z_k}{|z_n-\overline{z}_k|^2}\leqslant4h^2\sum_{n\ne k,\,|n|\geqslant
N_0}\frac1{(2|n-k|-1)^2}\leqslant8h^2\sum_{l=1}^\infty\frac1{(2l-1)^2}=\pi^2h^2.
$$
Та же оценка справедлива и для последовательности $\{z_{2n+1}\}_{|n|> N_0}$. Положим $m=\pi^2h^2$. Из леммы \ref{lem:deltam} следует, что обе последовательности
являются интерполяционными, причем число $\delta$ из оценки \eqref{intseq} можно взять равным $e^{-32\pi^2h^3}$. Положим $M=9(3e^{64\pi^2h^3}-1)^2/4$ (см. теорему
\ref{Carleson2}) и найдем номер $N\geqslant N_0$ такой, что
$$
M|z_{2n}-z_{2n+1}|<1/4\qquad \text{для всех}\ \  |n|>N.
$$
Пусть $J$ --- произвольное конечное подмножество $\{n:|n|\geqslant N\}$, а $K\geqslant\max\{|n|:n\in J\}$
--- произвольный номер. Обозначим $\{g_{j,K}\}_{|j|=N}^K$ и $\{h_{j,K}\}_{|j|=N}^K$
--- рациональные функции из теоремы \ref{Carleson2}, построенные по последовательностям $\{z_{2j}\}_{|j|=N}^K$ и $\{z_{2j+1}\}_{|j|=N}^K$ соответственно, т.е.
$$
g_{j,K}(z_{2l})=h_{j,K}(z_{2l+1})=\delta_{jl}.
$$
Далее, для каждого $j$, $N\leqslant |j|\leqslant K$, построим, пользуясь леммой \ref{lem:interp}, функцию $\varphi_{j,K}$ следующим образом. Если $z_{2j}\ne
z_{2j+1}$, то потребуем, чтобы
$$
\varphi_{j,K}(z_{2j})=w_1=\frac1{h_{j,K}(z_{2j})},\qquad \varphi_{j,K}(z_{2j+1})=w_2=\frac1{g_{j,K}(z_{2j+1})}.
$$
Заметим, что для любой функции $f\in H_\infty$ из интегральной формулы Коши следует оценка $\sup_{\mathrm{Im\,} z\geqslant1}|f'(z)|\leqslant\|f\|_\infty$. Поскольку
$$
|h_{j,K}(z_{2j})-1|=|h_{j,K}(z_{2j})-h_{j,K}(z_{2j+1})|\leqslant\sup_{\mathrm{Im\,} z\geqslant1}|h'_{j,K}(z)||z_{2j}-z_{2j+1}|\leqslant
M|z_{2j}-z_{2j+1}|\leqslant1/4,
$$
то числа $|1-w_1|$ и $|1-w_2|$ не превосходят $\min\{4/3 M|z_{2j}-z_{2j+1}|,\,1/3\}$. Тогда из \eqref{correction1} следует, что
$\|\varphi_{j,K}\|_\infty\leqslant24hM+6$. Если $z_{2j}=z_{2j+1}$, то потребуем
$$
\varphi_{j,K}(z_{2j})=1, \qquad\varphi'_{j,K}(z_{2j})=-(g_{j,K}h_{j,K})'(z_{2j})=-g'_{j,K}(z_{2j})-h'_{j,K}(z_{2j}).
$$
Тогда из \eqref{correction2} следует, что $\|\varphi_{j,K}\|_\infty\leqslant8hM+1$. Таким образом, для каждого $j$, $N\leqslant |j|\leqslant K$, определена
рациональная функция
$$
f_{j,K}(z):=g_{j,K}(z)h_{j,K}(z)\varphi_{j,K}(z)\in H_\infty,
$$
для которой %$|f_j(z)|\leqslant(48\pi hM+6)|g_j(z)||h_j(z)|$ в $\mathbb C_+$ и
$$
f_{j,K}(z_{2l})=f_{j,K}(z_{2l+1})=\delta_{j\,l}, \quad  l\ne j,\ N\leqslant |j|,\ l|\leqslant K,
$$
причем
$$
f_{j,K}'(z_{2l})=0,\ \text{если}\ z_{2l}=z_{2l+1}.
$$
Искомую функцию $f_K(z)$ определим суммой $f_K(z)=\sum_{j\in J}f_{j,K}(z)$. Тогда равенства \eqref{maininterp} выполнены и $\forall z\in\mathbb C_+$
$$
|f_K(z)|\leqslant\sum_{j\in J}|f_{j,K}(z)|\leqslant(24hM+6)\sum_{|j|=N}^K|g_{j,K}(z)||h_{j,K}(z)|\leqslant(24hM+6)M^2=\mu.\qquad\square
$$
Приступим к доказательству базисности Рисса из подпространств. Вначале мы сформулируем две теоремы, которые будут использоваться в доказательстве: теорему Гельфанда
(см. \cite[Гл.VI, \S5]{GK}) и теорему фон~Неймана (см. \cite[Гл. XI]{RN}).

\begin{THEOREM}[И.~М.~Гельфанд]\label{Gelfand}
Система $\{\mathcal H_n=\mathrm{Rn\,}\mathcal P_n\}$ является базисом Рисса из подпространств в замыкании своей линейной оболочки тогда и только тогда, когда
\begin{equation}\label{Jprop}
\sup_J\|\sum_{n\in J}\mathcal P_n\|<\infty,
\end{equation}
где супремум берется по всем конечным подмножествам индексов.
\end{THEOREM}

\begin{THEOREM}[Дж.~фон~Нейман]\label{Neuman}
Пусть $T$
--- произвольное сжатие в гильбертовом пространстве, т.е.
$\|T\|\leqslant1$, а функция $f$ голоморфна в круге $|z|<r$, $r>1$, и ограничена в круге $|z|\leqslant1$ константой $\mu$. Тогда $\|f(T)\|\leqslant\mu$.
\end{THEOREM}

Пусть оператор $\mathcal L_{P,U}$ регулярен, но не сильно регулярен. Пусть $\mathcal H_n=\mathrm{Rn\,}\mathcal  P_n$, $|n|\geqslant N_0$, --- корневые подпространства
этого оператора, введенные в определении \ref{def:RP}. Определим дополнительно подпространство $\mathcal H_0=\mathrm{Rn\,}\mathcal  S_{N_0}$, где $\mathcal
S_{N_0}:=1/(2\pi i)\int_\gamma \mathfrak R(\lambda)\,d\lambda$, а замкнутый кусочно--гладкий жорданов контур $\gamma$ охватывает все собственные значения $\lambda_n$
оператора $\mathcal L_{P,U}$ с номерами $n:|n|<2N_0$, и только их.
\begin{theorem} Система $\{\mathcal H_0,\,\mathcal H_n\}_{|n|\geqslant N_0}$ образует базис Рисса из подпространств в пространстве $\mathbb H$.
\end{theorem}
\begin{proof}
Применим теорему \ref{Gelfand}. Из теоремы  \ref{tm:compl} следует, что замыкание линейной оболочки системы $\{\mathcal H_0,\,\mathcal H_n\}_{|n|\geqslant N_0}$
совпадает со всем пространством $\mathbb H$, так что остается доказать выполнение свойства \eqref{Jprop}. Пользуясь леммой \ref{lem:summa}, представим оператор
$\mathcal L_{P,U}$ в виде суммы $\mathcal L_{P,U}=A+V$. Поскольку система собственных функций оператора $A$ образует базис Рисса в пространстве $\mathbb H$, то
найдется такое скалярное произведение $\langle\cdot,\cdot\rangle_1$, топологически эквивалентное исходному (т.е. $c_1\|\cdot\|_1\leqslant\|\cdot\|\leqslant
c_2\|\cdot\|_1$ для некоторых $c_1$ и $c_2$), относительно которого эта система является ортонормированным базисом (см. \cite[Гл.VI, \S2]{GK}). В силу оценки на
нормы, свойство базисности системы подпространств $\{\mathcal H_n\}$ не изменится при переходе к новому скалярному произведению. В новом скалярном произведении
оператор $A$ диагонален в ортогонормированном базисе из своих собственных векторов, т.е. нормален. Тогда числовой образ $\{\langle A\mathbf f,\mathbf f\rangle_1:\
\|\mathbf f\|_1=1\}$ оператора $A$ равен замыканию выпуклой оболочки спектра $\sigma(A)$, а значит лежит в некоторой горизонтальной полосе. Следовательно, числовой
образ оператора $\mathcal L_{P,U}$ (относительно нового скалярного произведения) также лежит в некоторой полосе $\Pi_\alpha$. Поскольку сдвиг не меняет свойств
базисности, то далее можно работать с операторм $B=\mathcal L_{P,U}+i(\alpha+1)$, числовой образ и спектр которого лежат в полосе $1\leqslant\mathrm{Im\,}
z\leqslant2h$, где $h=\alpha+1$. Точки $\{\lambda_n+i(\alpha+1)\}_{n\in\mathbb Z}$ спектра оператора $B$ удовлетворяют условиям утверждения \ref{prop:interp}. Пусть
числа $N$ и $\mu$ определены в формулировке этого утверждения (они зависят только от оператора $\mathcal L_{P,U}$ и, по построению, $N\geqslant N_0$), а
$$
\nu:=\|\mathcal S_{N_0}\|_1+\sum_{|n|=N_0}^{N-1}\|\mathcal P_n\|_1.
$$
Пусть $J\subset\{n\in\mathbb Z:|n|\geqslant N\}$
--- произвольное конечное подмножество, $K$ --- произвольный номер, такой, что $K>\max\{|n|:n\in J\}$, а $f_K$ --- рациональная функция,
построенная в утверждении. Из общей теории функционального исчисления операторов (см., например, \cite[Гл.IX, \S151]{RN}) и представления \eqref{biortog} следует, что
$f_K(B)=\sum_{n\in J}\mathcal P_n$. Пусть $T:=(B-i)(B+i)^{-1}$
--- преобразование Кэли оператора $B$. Легко видеть, что
$$
\forall x\in\mathfrak D(B):\ \|(B+i)x\|_1^2-\|(B-i)x\|_1^2=4\mathrm{Im\,}(Bx,x)_1>0,
$$
откуда
$$
\|(B-i)(B+i)^{-1}x\|_1\leqslant\|x\|_1.
$$
Так как подпространство $\mathfrak D(B)=\mathfrak D(\mathcal L_{P,U})$ плотно в $\mathbb H$, то оператор $T$ продолжается на все пространство и $\|T\|_1\leqslant1$.
Обозначим $g_K(z)=f_K\left(\frac{iz+i}{1-z}\right)$. Тогда, согласно теореме \ref{Neuman}, $\|g_K(T)\|_1\leqslant\mu$. Далее, $\forall x\in H_n$, $|n|>N$, выполнено
$g_K(T)x=f_K(B)x$ при $K\geqslant n$. Переходя к пределу при $K\to\infty$, получим, что $\|\sum_{n\in J}\mathcal P_n\|_1\leqslant\mu\|x\|_1$ на подпространстве
$\overline{\cup_{|n|\geqslant N}H_n}$. Тогда для произвольного $x\in\mathbb H$:
$$
\Bigg\|\sum_{n\in J}\mathcal P_nx\Bigg\|_1=\Bigg\|\sum_{n\in J}\mathcal P_n\Bigg(x-\Bigg(\sum_{|k|=N_0}^{N-1}\mathcal P_k+\mathcal S_{N_0}\Bigg)x\Bigg)\Bigg\|_1
\leqslant\mu(1+\nu)\|x\|_1.
$$
Если теперь $J\subset\{n\in\mathbb Z:|n|\geqslant N_0\}\cup\{0\}$ --- конечное подмножество, то нормы $\|\sum_{n\in J}\mathcal P_n\|_1$ ограничены числом
$\mu+\nu+\mu\nu$, не зависящим от $J$.
\end{proof}

\end{document}